\renewcommand{\a}{\alpha}
\newcommand{\e}{\epsilon}
\renewcommand{\l}{\left}
\renewcommand{\r}{\right}
\newtheorem{theorem}{Theorem}
\newtheorem{lemma}{Lemma}
\newtheorem{remark}{Remark}
\newtheorem{definition}{Definition}
\begin{document}

\title{Effective Approximation for a Nonlocal Stochastic Schr\"{o}dinger Equation with Oscillating Potential \thanks{This work was partly supported by the National Natural Science Foundation of China (NSFC)$11771161$, $11971184$ and $11771449.$}}
   %  \author{Li Lin,  Meihua Yang\\
%\small{ School of Mathematics and Statistics, Huazhong University of Science and Technology} \\
%\small {Wuhan, 430074, China}\\
%\small{ Center for Mathematical Sciences, Huazhong University of Science and Technology} \\
%\small {Wuhan, 430074, China}\\
%Jinqiao Duan \footnote{Corresponding author: duan@iit.edu.}\\
%\small { Department of Applied Mathematics, Illinois Institute of Technology}\\
%\small {Chicago, Illnois 60616, USA}\\}
\author{Li Lin \footnote{School of Mathematics and Statistics and Center for Mathematical Sciences, Huazhong University of Science and Technology,Wuhan, 430074, China. E-mail address: linli@hust.edu.cn}
\and  Meihua Yang \footnote{School of Mathematics and Statistics and Center for Mathematical Sciences, Huazhong University of Science and Technology,Wuhan, 430074, China. E-mail address: yangmeih@hust.edu.cn}
\and Jinqiao Duan \footnote{Corresponding author. Department of Applied Mathematics, Illinois Institute of Technology, Chicago, Illnois 60616, USA. E-mail address: duan@iit.edu.}}

\maketitle
\begin{abstract}
We study the effective approximation for a nonlocal stochastic Schr\"{o}dinger equation with a rapidly oscillating, periodically time-dependent potential. We use the natural diffusive scaling of heterogeneous system and study the limit behaviour as the scaling parameter tends to 0.  This is motivated by data assimilations with non-Gaussian uncertainties. The nonlocal operator in this stochastic partial differential
equation is the generator of a non-Gaussian L\'evy-type process (i.e., a class of anomalous diffusion processes), with non-integrable jump kernel.
With help of  a  two-scale convergence technique, we establish effective approximation for this nonlocal stochastic partial differential equation. More precisely,  we show that a nonlocal stochastic Schr\"{o}dinger equation has a nonlocal effective equation. We show that it approximates the orginal stochastic Schr\"{o}dinger equation weakly in a Sobolev-type space and strongly in $L^2$ space. In  particular,  this effective approximattion  holds when the nonlocal operator is the fractional Laplacian.
\end{abstract}

\textbf{Keywords:}  Effective approximation, Nonlocal Laplace operator, Schr\"{o}dinger equation, Effective dynamics.

\textbf{Mathematics Subject Classification:} 60H15, 35B27, 80M40, 26A33
\section{\bf Introduction}

The effective approximation of stochastic partial differential equations has attracted a lot of attention recently \cite{2,Duan,Otto}, due to its importance in valid mathematical modeling and efficient simulation.
The Schr\"{o}dinger equation is the fundamental equation in quantum physics for describing quantum mechanical behaviors. It quantifies  the wave function of a quantum system evolving over time. For the homogenization of deterministic Schr\"{o}dinger equations, there are two different scalings. One is the semi-classical scaling  \cite{Buslaev,Dimassi,Gerard}, and the other one is the typical scaling of homogenization \cite{Allaire,Allairetwo}.

%Moreover, we can consider the nonlocal Schr\"{o}dinger equations, because of the path integral approach to quantum physics has been developed.

 In the path integral approach \cite{Feynman} to quantum physics,   the integral over the Brownian trajectories leads to the   usual   (local) Schr\"{o}dinger equation \cite{Albeverio}.  Recent works on the path integrals over
  the  L\'{e}vy paths  (e.g., \cite{Piotr}) lead to    nonlocal Schr\"{o}dinger equations.  More physical investigations on fractional or nonlocal generalization of the Schr\"{o}dinger equations may be
  found in, for example, \cite{Dinh, Jiarui, Obrezkov, Yang, Zaba}.

As   random disturbances may affect the qualitative behaviors drastically and result in new properties for this quantum model, stochastic Schr\"{o}dinger equations have attracted attentions recently   (e.g.,  \cite{De,Gautier,Keller,Pellegrini, Zastawniak}).

In this paper, we will establish an effective approximation for a nonlocal stochastic Schr\"{o}dinger equation with a typical scaling and an oscillating potential. For stochastic homogenization problems,  a   two-scale convergence technique  \cite{Bourgeat,Casado,Heida,Zhikov}
is available.

Specifically, we consider the effective approximation for the following  \emph{nonlocal} stochastic Schr\"{o}dinger equation (\textbf{heterogeneous system}) with a small  positive scale parameter $\epsilon$:
\begin{small}
\begin{equation}\label{1}
\begin{cases}
$$idu_\e=\l(\frac{1}{2}\mathcal{A}^\e u_\e+\epsilon^{(1-\a)/2}\mathcal{V}^\epsilon u_\e\r)dt+g(u_\e)dW_t+fdt, \; 0<t<T, \; x \in D=(-1,1),  \\
u_\epsilon(0,x)=h(x),\quad\quad x \in D,\\
u_\e(t,x)=0,  \quad\quad x \in D^c=\mathbb{R}\backslash D,$$
\end{cases}
\end{equation}
\end{small}
where $\a\in(1,2)$, and where $f\in L^2\l(0,T;L^2(D)\r)$.
%The function $u_\e=u_\e(t,x):\mathbb{R}^+ \times D \rightarrow \mathbb{C}$ is an unknown wave function.
The function $\mathcal{V}^\e(t,x)=\mathcal{V}\l(\frac{t}{\e},\frac{x}{\e}\r)$ is a real potential and
 $W(t)$ is a one dimensional Brownian motion defined on a complete probability space $(\Omega, \mathcal{F},\mathbb{P})$. The function $g$ is noise intensity and satisfies the Lipschitz condition.
We consider the nonlocal operator
$$\mathcal{A}^\e u=\mathcal{D}\l(\Theta^\e\mathcal{D}^*u\r),$$
where $\Theta^\e(x,z)=\Theta\l(\frac{x}{\e},\frac{z}{\e}\r)$ is positive, bounded function, and the linear operator $\mathcal{D}$ and its adjoint operator $\mathcal{D}^*$ are defined as follows.

Given functions $\beta(x,z)$ and $\gamma(x,z)$, %with $\gamma$ antisymmetric($\gamma(y,x)=-\gamma(x,y)$)
the nonlocal divergence $\mathcal{D}$ on $\beta$ is defined as
$$\mathcal{D}(\beta)(x):=\int_{\mathbb{R}}(\beta(x,z)+\beta(z,x))\cdot\gamma(x,z)dz  \qquad \text{for}\; x\in \mathbb{R}.$$
For a function  $\phi(x)$, the adjoint operator $\mathcal{D^{*}}$ corresponding to $\mathcal{D}$ is the operator whose action on $\phi$ is given by
$$\mathcal{D^{*}}(\phi)(x,z)=-(\phi(z)-\phi(x))\gamma(x,z) \qquad \text{for} \; x,z\in \mathbb{R}.$$
The aim here is to investigate the limiting behaviour of $u_{\e}$ when $\varepsilon$ goes to zero, under the periodicity hypotheses on the coefficients
$\Theta$ and the potential $\mathcal{V}$, and the assumption that the mean value of $\mathcal{V}$ for the spatial variable is null.

Through this paper, we always take $\gamma(x,z)=(z-x)\frac{1}{|z-x|^{\frac{3+\a}{2}}}.$
As a special case, we set $\Theta$ to be $1$, we have
$$\frac{1}{2}\mathcal{D}\mathcal{D^{*}}=-(-\Delta)^{\a/2}.$$  The nonlocal Laplace operator $(-\Delta)^{\a/2}$ is defined as
$$(-\Delta)^{\a/2}u(x)=\int_{\mathbb{R}\backslash \{x\}}\frac{u(z)-u(x)}{|z-x|^{1+\a}}dz,$$
where the integral is in the sense of Cauchy principal value.

\begin{remark}
  Without loss of generality, we set $\Theta^{\e}(x,z)$ to be a symmetric function. In fact, we can define the symmetric and anti-symmetric parts of $\Theta$:
  $$\Theta^{\e}_s(x,z)=\frac{1}{2}(\Theta^{\e}(x,z)+\Theta^{\e}(z,x)) \quad \text{and} \quad \Theta^{\e}_a(x,z)=\frac{1}{2}(\Theta^{\e}(x,z)-\Theta^{\e}(z,x)).$$
From the fact that $$\mathcal{D}\l(\Theta^{\e}_a(x,z)\mathcal{D}^*u\r)=0,$$ we have $$\mathcal{D}\l(\Theta^{\e}(x,z)\mathcal{D}^*u\r)=\mathcal{D}\l(\Theta^{\e}_s(x,z)\mathcal{D}^*u\r).$$
\end{remark}

\begin{remark}
  For a function $\upsilon(x,y)$, we define
$$(\mathcal{D}^*_x\upsilon)(x,z,y)=-(\upsilon(z,y)-\upsilon(x,y))\gamma(x,z)$$
and
\[
\begin{split}
(\mathcal{D}_x\mathcal{D}^*_x\upsilon)(x,y)&=2\int_{\mathbb{R}}
-(\upsilon(z,y)-\upsilon(x,y))\gamma^2(x,z)dz\\
&=-2(-\Delta)^{\a/2}_x\upsilon(x,y).
\end{split}
\]
\end{remark}

Our purpose is to examine the convergence of  the solution  $u_\epsilon$  of (\ref{1}) in some probabilistic sense,  as $\epsilon\rightarrow0$,
and to specify the limit $\tilde{u}$. We will see that the limit process $\tilde{u}$ satisfies the following nonlocal stochastic partial differential equation (\textbf{effective system}):
\begin{align}\label{n7}
  \begin{cases}
    id\tilde{u}=\l(-\Xi_1(-\Delta)^{\a/2}\tilde{u}-\frac{\Xi_2}{2}\mathcal{D}|_{D}(\zeta)(x)-\Xi_3\zeta(x)\r)dt+g(\tilde{u})dW_t+fdt,\\
    \tilde{u}(x,t)=0,\quad\quad (x,t) \in D^c\times(0,T),\\
\tilde{u}(0)=h(x),  \quad\quad x \in D,$$
  \end{cases}
\end{align}
where
\begin{align*}
  &\Xi_1=\int_{Y\times N}\Theta(y,\eta)dydn,\\
  &\Xi_2=\int_{Y\times N \times Z}\Theta(y,\eta)\mathcal{D}_y^*\chi dydnd\tau,\\
  &\Xi_3=2\int_{Y\times Z}\mathcal{V}(y,\tau)\chi(y,\tau)dyd\tau,\\
  &\zeta(x)=\frac{1}{|D|}\int_{D}(\mathcal{D}^*\tilde{u})(x,z)dz,\\
  &\mathcal{D}|_{D}(\zeta)(x)=\int_{D}(\zeta(x)+\zeta(z))\gamma(x,z)dz,\\
\end{align*}
where the function $\chi$ is given by (\ref{654}). We set $Y=N=Z=(0,1)$ and consider $Y, N, Z$ as subsets of $\mathbb{R}_y, \mathbb{R}_\eta, \mathbb{R}_\tau $ respectively (the spaces of variables $y$, $\eta$ and $\tau$ respectively).
Through this paper, we always identify functions on $\mathbb{T}=(0,1)$ with their periodic extension to $\mathbb{R}$.

\textbf{Structure of this paper.} %In order to motivate the theory, in Section $1$, we present the heterogeneous system of \emph{nonlocal} Schr\"{o}dinger equation and give the problem statement.
In Section $2$, we recall some function spaces and deal with the existence and uniqueness of the Schr\"{o}dinger equation. Then, in Section $3$, we prove the main theorem and derive the effective system.
\section{Preliminaries}
 We now briefly discuss the well-posedness for the heterogeneous equation (\ref{1}), and derive a few uniform estimates concerning the solution $u^\e$.
\subsection{\bf Function spaces}
We set $Y=N=Z=(0,1)$ and consider $Y, N, Z$ as subsets of $\mathbb{R}_y, \mathbb{R}_\eta, \mathbb{R}_\tau $ respectively (the spaces of variables $y$, $\eta$ and $\tau$ respectively).

Let us first recall that a function $u$ is said to be $Y\times N\times Z$-periodic if for each $k,l,m\in \mathbb{Z}$, we have $u(y+k,\eta+l,\tau+m)=u(y,\eta,\tau)$ almost everywhere with $y,\eta,\tau\in \mathbb{R}.$ The space of all $Y\times N\times Z$-periodic continuous complex functions on $\mathbb{R}_y\times\mathbb{R}_\eta\times\mathbb{R}_\tau$
is denoted by $\mathcal{C}_{per}(Y\times N\times Z)$, that of all $Y\times N\times Z$-periodic functions in $L^p\l(\mathbb{R}_y\times\mathbb{R}_\eta\times\mathbb{R}_\tau\r)(1\leq p\leq\infty)$ is
denoted by $L^p_{per}(Y\times N\times Z).$ $\mathcal{C}_{per}(Y\times N\times Z)$ is a Banach space under the supremum norm on $\mathbb{R}\times\mathbb{R}\times\mathbb{R},$ whereas $L^p_{per}(Y\times N\times Z)$ is a Banach space under the norm
$$\|u\|_{L^p_{per}(Y\times N\times Z)}=\l(\int_{Y\times N\times Z}\l|u(y,\eta,\tau)\r|^pdyd\eta d\tau\r)^{\frac{1}{p}}.$$

Let $\a\in(1,2)$, the classical fractional Sobolev space is
$$H^{\a/2}(D)=\l\{u\in L^2(D): \int_{D}\int_{D}\frac{|u(x)-u(z)|^2}{|x-z|^{1+\a}}dxdz<\infty\r\},$$
with the norm
$$ \|u\|_{H^{\a/2}(D)}^2=\|u\|_{L^2(D)}^2+\int_{D}\int_{D}\frac{|u(x)-u(z)|^2}{|x-z|^{1+\a}}dxdz.$$
We set $u|_{\mathbb{R}\backslash D}\equiv 0$. For the nonlocal operator,  we have
\[
\begin{split}
(\frac{1}{2}\mathcal{A}^\epsilon&u,u)_{L^2(D)}=\frac{1}{2}\l(\Theta^\epsilon(x,z)\mathcal{D}^*u(x,z),\mathcal{D}^*u(x,z)\r)_{L^2(\mathbb{R}\times\mathbb{R})}\\
&=\int_D\int_{D^c}\Theta^\epsilon(x,z)\frac{|u(x)|^2}{|z-x|^{1+\a}}dzdx+\frac{1}{2}\int_{D}\int_{D}\Theta^\epsilon(x,z)\frac{|u(x)-u(z)|^2}{|x-z|^{1+\a}}dxdz.
\end{split}
\]
Pose $\rho(x):=\int_{D^c}\frac{1}{|z-x|^{1+\a}}dz.$ Since the fact that $\Theta^\epsilon(x,z)$ is a positive, bounded function, we then can define a weighted fractional Sobolev space without considering the function $\Theta^\epsilon(x,y):$
$$H_\rho^{\a/2}(D):=\l\{u\in L^2(\mathbb{R}): u|_{\mathbb{R}\backslash D}\equiv 0, \|u\|_{H_\rho^{\a/2}(D)}< \infty \r\},$$
equipped with the norm
$$\|u\|_{H_\rho^{\a/2}(D)}:= \l(\int_D\rho(x)|u(x)|^2dx+\frac{1}{2}\int_{D}\int_{D}\frac{|u(x)-u(z)|^2}{|x-z|^{1+\a}}dxdz\r)^{\frac{1}{2}},$$
which immediately implies that $\l(-(-\Delta)^{\a/2} u,u\r)_{L^2(D)}=\|u\|^2_{H_\rho^{\a/2}(D)}.$

%Recall that $C_{per}(Y)$ is the subspace of $C(\mathbb{R})$ of $Y$ -period functions. It is a Banach space under the supremum norm, whereas
%$L^p_{per}(Y)$ is a Banach space under the norm
%$||u||_{L^p_{per}(Y)}=(\int_Y|u(y)|^pdy)^{\frac{1}{p}}.$

The space $H^{\a/2}_\#(Y)$ of $Y$-periodic functions $u\in H^{\a/2}$ such that $\int_{Y}u(y)dy=0$ will be interest in this study. Provided with the norm,
$$\|u\|_{H^{\a/2}_\#(Y)}=\l(\int_{Y}\int_{N}\frac{|u(y)-u(\eta)|^2}{|y-\eta|^{1+\a}}dyd\eta\r)^{\frac{1}{2}}$$
%$$H_\#(Y)=\{u\in H_{per}(Y)| \int_Yu(y)dy=0\}.$$

For $s<0$, we define $H^s(D)$ as the dual space of $H^{-s}(D)$.

\subsection{\bf Well-posedness}
Let $B^\e$ be the linear operator in $L^2(D)$
defined by\\
\begin{center}
$B^\e u=-i\mathcal{A}^\e u $ for all $u\in D(B^\e),$\\
\end{center}
with domain
$$D(B^\e)=\{\upsilon\in H^{\a/2}_\rho(D): \mathcal{A}^\e\upsilon\in L^2(D)\}.$$
Then, $B^\e$ is of dense domain, and skew-adjoint since $\mathcal{A^\e}$ is self-adjoint\cite{Caze}. Consequently, $B^\e$ is a $m$-dissipative operator in $L^2(D)$(Corollary 2.4.11 of \cite{ Caze}). It follows by the Hille-Yosida-Philips theorem that $B^\e$ is the generator of a contraction semigroup $(G_t^\e)_{t>0}$.

Now, let us check the existence and uniqueness for equation (\ref{1}). The abstract problem for equation (\ref{1}) is given by
\begin{equation}\label{2}
\begin{cases}
$$du_\e=\l(B^\e u_\e+F_\e(u_\e)\r)dt+g(u_\e)dW_t, \\
u_\epsilon(0,x)=h(x),$$
\end{cases}
\end{equation}
where $F_\e$ is defined in $L^2\l(0,T;L^2(D)\r)$ by
$$F_\e(\upsilon)(t)=-i\epsilon^{1-\a}\mathcal{V}^\epsilon \upsilon-if(t).$$
We can obtain the following lemma (Theorem $3.3$ of \cite{Gaw}).
\begin{lemma}\label{l1}
  Suppose $h\in L^2(D),$ $f\in C\l([0,T];L^2(D)\r)$ and for all $\e>0$,
  \begin{equation}\label{321}
    \l\|\e^{(1-\a)/2}\mathcal{V}^\e\r\|_{\mathcal{L}\l(H_\rho^{\a/2}(D), H_\rho^{-\a/2}(D)\r)}\leq \beta,
  \end{equation}
  where $\beta$ is a positive constant independent of $\epsilon$ and $\mathcal{L}\l(H_\rho^{\a/2}(D), H_\rho^{-\a/2}(D)\r)$ is the space of linear continuous mapping of
  $H_\rho^{\a/2}(D)$ into $ H_\rho^{-\a/2}(D).$
  We obtain the existence and uniqueness of mild solution $u_\e(t)\in C
  \l([0,T];L^2(\Omega\times D)\r).$
\end{lemma}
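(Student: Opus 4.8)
The plan is to recognize the statement as an instance of the abstract well-posedness theory for semilinear stochastic evolution equations with globally Lipschitz coefficients (Theorem $3.3$ of \cite{Gaw}), and to verify its hypotheses for the reformulation (\ref{2}) before running a contraction-mapping argument in $C\l([0,T];L^2(\Omega\times D)\r)$. What is already in hand is exactly the linear part: $B^\e=-i\mathcal{A}^\e$ is skew-adjoint, hence $m$-dissipative, and generates a $C_0$-semigroup of contractions $(G_t^\e)_{t\ge 0}$; by Stone's theorem this is in fact a unitary group on $L^2(D)$, so $\|G_t^\e\|_{\mathcal{L}(L^2(D))}=1$ for every $t$. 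This is the only property of the semigroup I will use: since $\mathcal{A}^\e$ is of Schr\"odinger type there is no parabolic smoothing, and every estimate must therefore be carried out directly in $L^2(D)$.

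Next I would check the two nonlinear terms. The source $f$ lies in $C\l([0,T];L^2(D)\r)$ by hypothesis. For each fixed $\e>0$ multiplication by the oscillating potential $\mathcal{V}^\e(t,\cdot)=\mathcal{V}(t/\e,\cdot/\e)$ is a bounded operator on $L^2(D)$, since it inherits the boundedness of the periodic profile $\mathcal{V}$; hence the map
\[
F_\e(\upsilon)(t)=-i\e^{1-\a}\mathcal{V}^\e\upsilon-if(t)
\]
is affine in $\upsilon$, measurable in $t$, and globally Lipschitz in $\upsilon$ with a constant that may depend on $\e$ (of order $\e^{1-\a}\|\mathcal{V}\|_{L^\infty}$). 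The $\e$-uniform bound (\ref{321}) between $H_\rho^{\a/2}(D)$ and its dual is not what makes this step work; it is rather what will render the a priori estimates of the next subsection independent of $\e$. By assumption $g$ is globally Lipschitz, hence of linear growth.

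With the hypotheses in place I would pass to the mild formulation
\[
u_\e(t)=G_t^\e h+\int_0^t G_{t-s}^\e F_\e(u_\e(s))\,ds+\int_0^t G_{t-s}^\e g(u_\e(s))\,dW_s
\]
and study the solution map $\mathcal{K}$ on $C\l([0,T];L^2(\Omega\times D)\r)$. The first term is in $C\l([0,T];L^2(D)\r)$ because $h\in L^2(D)$ and $(G_t^\e)$ is strongly continuous; the deterministic convolution is controlled using $\|G_{t-s}^\e\|_{\mathcal{L}(L^2)}=1$, the Lipschitz bound on $F_\e$ and Cauchy--Schwarz in $s$; the stochastic convolution is controlled by the It\^o isometry together with the Burkholder--Davis--Gundy inequality and the Lipschitz/linear-growth bounds on $g$, again using $\|G_{t-s}^\e\|_{\mathcal{L}(L^2)}=1$. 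This yields, for $u,v\in C\l([0,T];L^2(\Omega\times D)\r)$,
\[
\mathbb{E}\l\|\mathcal{K}(u)(t)-\mathcal{K}(v)(t)\r\|_{L^2(D)}^2\le C\int_0^t\mathbb{E}\l\|u(s)-v(s)\r\|_{L^2(D)}^2\,ds,
\]
so that iteration (or an equivalent exponentially weighted sup-norm) makes some power $\mathcal{K}^n$ a strict contraction; its unique fixed point is the desired mild solution $u_\e\in C\l([0,T];L^2(\Omega\times D)\r)$, and uniqueness is immediate from the same inequality. Strong continuity of $t\mapsto u_\e(t)$ in $L^2(\Omega\times D)$ follows from the stochastic Fubini theorem and the factorization method for the stochastic convolution.

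The point that requires care is precisely the absence of smoothing: unlike the parabolic situation one cannot use the semigroup to absorb any roughness in the drift or the diffusion, so it is essential that, for each fixed $\e$, the perturbation $\e^{1-\a}\mathcal{V}^\e$ act boundedly on $L^2(D)$ and that $g$ be $L^2(D)$-Lipschitz; then the contraction estimate closes in $L^2$ with a constant that is independent of time but is allowed to depend on $\e$. That $\e$-dependence is harmless here and is removed afterwards through the energy estimate built on the $H_\rho^{\a/2}(D)$-norm and hypothesis (\ref{321}).
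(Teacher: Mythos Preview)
Your proposal is correct and follows essentially the same route as the paper: both appeal to Theorem~3.3 of \cite{Gaw}, with the paper simply citing the result while you spell out the hypothesis verification (contraction semigroup from the skew-adjoint $B^\e$, global Lipschitz drift and diffusion in $L^2(D)$) and the underlying fixed-point argument. Your observation that (\ref{321}) is not the estimate driving well-posedness for fixed $\e$, but rather the one securing $\e$-uniform bounds later, is accurate and matches how the paper uses it in Lemma~\ref{l2}.
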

\begin{remark}
  For an illustrative example, if the potential $\mathcal{V}$ belongs to $C_{per}(Y)$ and verifies
\begin{equation}\label{421}
\int_{Y}\mathcal{V}(y)dy=0.
\end{equation}
  Then, the linear operator $\e^{(1-\a)/2}\mathcal{V}^\e$ vertifies (\ref{321}). Indeed, since $\mathcal{V}\in C_{per}(Y)$ and verifies (\ref{421}), the equation
\begin{equation*}
 \mathcal{D}_y(\Theta \mathcal{D}_y^*\varsigma)=\mathcal{V}
\end{equation*}
admits a unique solution $\varsigma$ in $H^{\a/2}_\#(Y).$ Let $\varsigma^\e(x)=\varsigma(\frac{x}{\epsilon})$. For all $\e>0,$ we have
\begin{equation}
  \e^{(1+\a)/2}\mathcal{D}\l(\Theta \mathcal{D}^*\varsigma^\e\r)=\e^{(1-\a)/2}\mathcal{V}^\e
\end{equation}
Thus, for any $u\in{H_\rho^{\a/2}}(D),$ we have
$$\l(\e^{(1-\a)/2}\mathcal{V}^\e u, v\r)=\e^{(1+\a)/2}\int_{D\times D}\Theta \mathcal{D}^*\varsigma
\mathcal{D}^*(uv)dxdz=\int_{D\times D}\Theta (\mathcal{D}_y^*\varsigma)^\e
\mathcal{D}^*(uv)dxdz$$
for all $v\in \mathcal{M}(D)$($\mathcal{M}(D)$ is the space of functions in $C^\infty(D)$ with compact supports). From the nonlocal Poincar\'{e} inequality\cite{Fel}, we have
$$\l|\l(\e^{(1-\a)/2}\mathcal{V}^\e u, v\r)\r|\leq C\l\|u\r\|_{H_\rho^{\a/2}(D)}\l\|v\r\|_{H_\rho^{\a/2}(D)},$$
where $c$ is a positive constant.
Thus, by the density of $\mathcal{M}(D)$ in $H_\rho^{\a/2}(D)$, the precedent inequality holds for all $v\in H_\rho^{\a/2}(D)$. Hence, inequality $(\ref{321})$ follows.
\end{remark}
Now, let us prove some uniform estimates in the following lemma. Before this lemma, we make an useful remark.
Let us put
  $$a^\e(u,v)=\int_{D}\int_{D}\Theta^\e\mathcal{D}^*u(x,z)\overline{\mathcal{D}^*v}(x,z)dxdz.$$
\begin{lemma}\label{l2}
  Let $u^\e$ be a solution of equation (\ref{1}) with initial value $h\in L^2(D).$ Suppose further that $$f, f'\in L^2\l(0,T;L^2(D)\r)$$and
  $$\l\|\e^{(-1-\a)/2}(\frac{\partial\mathcal{V}}{\partial\tau})^\e\r\|_{\mathcal{L}(H_\rho^{\a/2}(D), H_\rho^{-\a/2}(D))}\leq c_0,$$
  where $f'$ stands for $\frac{df}{dt}$, $c_0$ is a constant independent of $\epsilon.$ Then there exists a constant $c>0$ independent of $\epsilon$ such that the solution $u_\e$
  of equation (\ref{1}) verifies:
  $$\sup_\epsilon\mathbb{E}\sup_{0\leq t\leq T}\l\|u_\epsilon\r\|^2_{L^2(D)}+\sup_\epsilon\mathbb{E}\l\|u_\e\r\|^2_{L^2\l(0,T;H_\rho^{\a/2}(D)\r)}
  +\sup_\epsilon\mathbb{E}\l\|u'_\e\r\|^2_{L^2\l(0,T;H_\rho^{-\a/2}(D)\r)}\leq c. $$

\begin{proof}
  Applying It\^{o} formula for $u_\e(t),$ we have
  \begin{small}

  \begin{equation*}
  \begin{split}
  &\l\|u_\e(t)\r\|^2_{L^2}=\l\|u_\e(0)\r\|^2_{L^2}-\mbox{Re}\int_0^t2i\l(\mathcal{A}^\e u_\e(s)+\e^{(1-\a)/2}\mathcal{V^\e}u_\e(s),,u_\e(s)\r)ds\\
 &\quad-\mbox{Re}\int_0^t2i\l(g(u_\e(s)),u_\e(s)\r)dW_s-\mbox{Re}\int_0^t2i\l(f,u_\e(s)\r)ds+\int_0^t\l\|g(u_s)\r\|^2_{L^2}ds\\
 &=\l\|u_\e(0)\r\|^2_{L^2}+\mbox{Im}\int_0^t2\l(g(u_\e(s)),u_\e(s)\r)dW_s+\mbox{Im}\int_0^t2\l(f,u_\e(s)\r)ds+\int_0^t\l\|g(u_s)\r\|^2_{L^2}ds.
  \end{split}
  \end{equation*}
  \end{small}
  By Burkholder-Davis-Gundy's inequality, H\"{o}lder inequality and Young's inequality, it refers that
  \begin{equation*}
  \begin{split}
  \mathbb{E}&\sup_{0\leq t\leq T}2\mbox{Im}\int_0^t2\l(g(u_\e(s)),u_\e(s)\r)dW_s\\
  &=\mathbb{E}\sup_{0\leq t\leq T}2\mbox{Im}\int_0^t2\int_{D}g\l(u_\e(s)\r)\bar{u}_\e(s)dW_sdx\\
  &\leq c_1\mathbb{E}\l(\int_0^T\l\|\bar{u}_\e(s) g(u_\e(s))\r\|^2_{L^2}ds\r)^{\frac{1}{2}}\\
  &\leq c_1\mathbb{E}\l(\delta\sup_{0\leq t\leq T}\l\|u_\e(t)\r\|^2_{L^2}+\frac{1}{\delta}\int_0^T\l\|g(u_\e(s))\r\|^2_{L^2}ds\r)\\
  &\leq\frac{1}{3}\mathbb{E}\sup_{0\leq t\leq T}\l\|u_\e(t)\r\|^2_{L^2}+c_2\mathbb{E}\int_0^T\l\|u_\e(s)\r\|^2_{L^2}ds+c_2.
   \end{split}
  \end{equation*}
  Then, we obtain
 $$\frac{2}{3}\mathbb{E}\sup_{0\leq t\leq T}||u_\e(t)||^2_{L^2}\leq ||u_\e(0)||^2_{L^2}+c_3\int_0^T\sup_{0\leq r\leq s}||u_\e(r)||^2_{L^2}dr+c_4,$$
 which implies from Gronwall inequality that
 $$\mathbb{E}\sup_{0\leq t\leq T}||u_\e(t)||^2_{L^2}\leq c_5,$$
 where the positive constant $c_5$ is indepedent of $\e.$\\
 Moreover, we also have $$\mathbb{E}\sup_{0\leq t\leq T}||u_\e(t)||^4_{L^2}\leq c_5.$$
 Next, we denote $d\dot{W_t}=\frac{dW_t}{dt}$ and take the product in $L^2(D)$ of equation (\ref{1}) with $u'_\e$
 \begin{small}
 $$i\l\|u'_\e(t)\r\|^2_{L^2}=\l(\mathcal{A}^\e u_\e(t)+\e^{(1-\a)/2}\mathcal{V^\e}u_\e(t),,u'_\e(t)\r)+\l(g(u_\e(t)),u'_\e(t)\r)d\dot{W_t}+\l(f(t),u'_\e(t)\r).$$
 \end{small}
 By the preceding equality we have
 \begin{small}
 $$\mbox{Re}\l(\mathcal{A}^\e u_\e(t)+\e^{(1-\a)/2}\mathcal{V^\e}u_\e(t),u'_\e(t)\r)+\mbox{Re}\l(g(u_\e(t)),u'_\e(t)\r)d\dot{W_t}+\mbox{Re}\l(f(t),u'_\e(t)\r)=0.$$
 \end{small}
 Since the fact that
 $$\frac{d}{dt}a^\e\l(u_\e(t),u_\e(t)\r)=2\mbox{Re}\l(\mathcal{A}^\e u_\e(t),u'_\e(t)\r),$$
 \begin{small}
 $$\e^{(1-\a)/2}\frac{d}{dt}\l(\mathcal{V}^\e u_\e(t),u_\e(t)\r)=\e^{(-1-\a)/2}\l((\frac{\partial\mathcal{V}}{\partial{\tau}})^\e u_\e(t),u_\e(t)\r)
 +2{\e}^{(1-\a)/2}\mbox{Re}\l(\mathcal{V}^\e u_\e(t),u'_\e(t)\r).$$
 \end{small}
 We have
 \begin{equation*}
 \begin{split}
   &\frac{1}{2}\frac{d}{dt}a^\e(u_\e(t),u_\e(t))+\frac{1}{2}\e^{(1-\a)/2}\frac{d}{dt}(\mathcal{V}^\e u_\e(t),u_\e(t))-
   \e^{(-1-\a)/2}\l(\l(\frac{\partial\mathcal{V}}{\partial{\tau}}\r)^\e u_\e(t),u_\e(t)\r)\\
   &+\mbox{Re}\l(g(u_\e(t)),u'_\e(t)\r)d\dot{W_t}+\mbox{Re}\frac{d}{dt}(f(t),u_\e(t))
   -\mbox{Re}\l(f'(t),u_\e(t)\r)=0.
 \end{split}
 \end{equation*}
 An integration on $[0,t]$ of the equality above yields,
 \begin{small}
 \begin{equation*}
 \begin{split}
 &\frac{1}{2}a^\e(u_\e(t),u_\e(t))+\frac{1}{2}\e^{(1-\a)/2}\l(\mathcal{V}^\e u_\e(t),u_\e(t)\r)-\frac{1}{2}a^\e(h,h)
 -\frac{1}{2}\e^{(1-\a)/2}\l(\mathcal{V}^\e(0) h,h\r)\\
 &=\e^{(-1-\a)/2}\int_0^t\l(\l(\frac{\partial\mathcal{V}}{\partial{\tau}}\r)^\e u_\e(s),u_\e(s)\r)ds
 -\int_0^t\mbox{Re}\l(g(u_\e(s)),u'_\e(s)\r)dW_s\\
 &\quad-\mbox{Re}(f(t),u_\e(t))+\mbox{Re}(f(0),h)+\mbox{Re}\int_0^t\l(f'(s),u_\e(s)\r)ds.
\end{split}
 \end{equation*}
 \end{small}
 It follows that
  \begin{equation*}
 \begin{split}
  c_6||u_\e(t)||^2_{H_\rho^{\a/2}}&+2\int_0^t\mbox{Re}\l(g(u_\e(s)),u'_\e(s)\r)dW_s\\
  &\leq \beta||u_\e(t)||^2_{L^2}+c_7||h||^2_{H_\rho^{\a/2}}
  +\beta||h||^2_{L^2}\\&+c_0||u_\e(t)||^2_{L^2(0,T;L^2(D))}
  +2||f(t)||_{L^2}||u_\e(t)||_{L^2}\\
  &+2||f(0)||_{L^2}||h||_{L^2}+2||f'||_{L^2(0,T;L^2(D))}||u_\e(t)||_{L^2(0,T;L^2(D))}.
 \end{split}
 \end{equation*}
 We consider the expectation after integrating on $[0,T]$ the preceding inequality and using Burkholder-Davis-Gundy's inequality, we have
 $$\mathbb{E}||u_\e||^2_{L^2(0,T;H_\rho^{\a/2}(D))}\leq c_8,$$
  where the positive constant $c_8$ is indepedent of $\e.$
  By equatioin (\ref{1}), we have
  \begin{equation*}
 \begin{split}
  i\int_0^T(u'_\e(t),\bar{v}(t))dt&=\int_0^Ta^\e(u_\e(t),v(t))dt+\int_0^T\e^{(1-\a)/2}\l(\mathcal{V}^\e u_\e(t),v(t)\r)dt\\
  &\quad+\int_0^T(g(u_\e),v(t))dW_t+\int_0^T(f(t),v(t))dt
   \end{split}
 \end{equation*}
 for all $v\in L^2\l(0,T; H_\rho^{\a/2}(D)\r).$
 Hence, we have
 $$\mathbb{E}||u'_\e||^2_{L^2\l(0,T;H_\rho^{-\a/2}(D)\r)}\leq c_9.$$
 In summary, we deduce that
 \begin{small}
 $$\sup_\epsilon\mathbb{E}\sup_{0\leq t\leq T}||u_\epsilon||^2_{L^2(D)}+\sup_\epsilon\mathbb{E}||u_\e||^2_{L^2\l(0,T;H_\rho^{\a/2}(D)\r)}
  +\sup_\epsilon\mathbb{E}||u'_\e||^2_{L^2\l(0,T;H_\rho^{-\a/2}(D)\r)}\leq c. $$
  \end{small}
\end{proof}
\end{lemma}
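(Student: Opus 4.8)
\emph{Proof proposal.} The estimate has three parts and they are established in cascade. For the $L^\infty_tL^2_x$ part, the plan is to apply the It\^o formula to $t\mapsto\l\|u_\e(t)\r\|^2_{L^2(D)}$. The key observation is that the two deterministic ``Hamiltonian'' terms of \eqref{1} contribute nothing: since $\mathcal{A}^\e$ is self-adjoint and $\mathcal{V}^\e$ is real-valued, both $\l(\mathcal{A}^\e u_\e,u_\e\r)_{L^2(D)}$ and $\l(\mathcal{V}^\e u_\e,u_\e\r)_{L^2(D)}$ are real, so after multiplying \eqref{1} by $-i$ and taking the real part only the stochastic integral, the forcing term, and the It\^o correction $\int_0^t\l\|g(u_\e)\r\|^2_{L^2}\,ds$ survive. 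Using the linear growth of $g$ (a consequence of its Lipschitz property), Burkholder--Davis--Gundy on the martingale, and Young's inequality to absorb a fraction of $\mathbb{E}\sup_{[0,T]}\|u_\e\|^2_{L^2}$ into the left-hand side, Gronwall's inequality yields $\mathbb{E}\sup_{[0,T]}\|u_\e(t)\|^2_{L^2}\le c$ uniformly in $\e$; the same argument run on $\|u_\e\|^4_{L^2}$ produces the fourth-moment bound, which is needed below.

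For the $L^2_tH^{\a/2}_\rho$ part I would (formally) pair \eqref{1} with $u_\e'$ and take the real part, which annihilates $i\|u_\e'\|^2_{L^2}$. Using the product rules $\ddt a^\e(u_\e,u_\e)=2\,\mathrm{Re}\l(\mathcal{A}^\e u_\e,u_\e'\r)$ and $\e^{(1-\a)/2}\ddt\l(\mathcal{V}^\e u_\e,u_\e\r)=\e^{(-1-\a)/2}\l((\partial_\tau\mathcal{V})^\e u_\e,u_\e\r)+2\e^{(1-\a)/2}\mathrm{Re}\l(\mathcal{V}^\e u_\e,u_\e'\r)$, this becomes an identity for $\ddt a^\e(u_\e,u_\e)$; integrating on $[0,t]$ and invoking the coercivity $a^\e(u,u)\ge c\,\|u\|^2_{H^{\a/2}_\rho(D)}$, which follows from the identity recorded above for $\l(\tfrac12\mathcal{A}^\e u,u\r)_{L^2(D)}$ together with the positivity and boundedness of $\Theta^\e$, I bound the potential term at time $t$ by \eqref{321}, the $(\partial_\tau\mathcal{V})^\e$ term by its hypothesis and the Step~1 control of $\|u_\e\|^2_{L^2(0,T;L^2(D))}$, and the $f$-terms by $\|f\|_{L^2(0,T;L^2)}$ and (after an integration by parts in $t$) $\|f'\|_{L^2(0,T;L^2)}$. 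Taking expectations, applying Burkholder--Davis--Gundy to the remaining stochastic integral, and using Step~1 once more to dominate $\|u_\e(t)\|_{L^2}\|f(t)\|_{L^2}$ then gives $\mathbb{E}\|u_\e\|^2_{L^2(0,T;H^{\a/2}_\rho(D))}\le c$, uniformly in $\e$.

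For the last part I read $u_\e'$ off the weak form of \eqref{1}: for $v\in L^2(0,T;H^{\a/2}_\rho(D))$,
\[
i\int_0^T\l(u_\e',\bar v\r)dt=\int_0^Ta^\e(u_\e,v)\,dt+\int_0^T\e^{(1-\a)/2}\l(\mathcal{V}^\e u_\e,v\r)dt+\int_0^T\l(g(u_\e),v\r)dW_t+\int_0^T\l(f,v\r)dt .
\]
Bounding the first term by the continuity of $a^\e$ (boundedness of $\Theta^\e$), the second by \eqref{321}, and the last two via the linear growth of $g$, $f\in L^2(0,T;L^2(D))$, and Burkholder--Davis--Gundy, then taking the supremum over $\|v\|_{L^2(0,T;H^{\a/2}_\rho(D))}\le 1$ and expectations, yields $\mathbb{E}\|u_\e'\|^2_{L^2(0,T;H^{-\a/2}_\rho(D))}\le c$. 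Summing the three estimates finishes the proof.

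The genuinely delicate step is Step~2: differentiating $a^\e(u_\e,u_\e)$ in $t$ and pairing the equation with $u_\e'$ are not legitimate pathwise, because the Brownian forcing prevents $u_\e$ from being time-differentiable. I would make this rigorous by first running the whole argument on a Galerkin truncation in the eigenbasis of the self-adjoint operator $\mathcal{A}^\e$, where the approximant $u_\e^n$ is a bona fide finite-dimensional It\^o process and the It\^o formula applies directly to $a^\e(u_\e^n,u_\e^n)$, obtaining all three bounds uniformly in $n$ and $\e$; passing $n\to\infty$ by weak-$*$ compactness for the $L^\infty_tL^2_x$ bound, by weak compactness in $L^2(\Omega\times(0,T);H^{\a/2}_\rho(D))$ for the second, and by lower semicontinuity of the norms then transfers the estimates to $u_\e$ itself. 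The one place in that passage that needs genuine care is rewriting the stochastic integral $\int_0^t\mathrm{Re}\l(g(u_\e^n),(u_\e^n)'\r)dW_s$ so that no time-derivative of $u_\e^n$ occurs under $dW_s$ before it is estimated.
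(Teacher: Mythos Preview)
Your proposal is correct and follows essentially the same three-step strategy as the paper: It\^o's formula on $\|u_\e\|_{L^2}^2$ with the Hamiltonian terms dropping out, a formal pairing with $u_\e'$ together with the product rules for $\ddt a^\e(u_\e,u_\e)$ and $\ddt(\mathcal{V}^\e u_\e,u_\e)$, and finally the weak formulation to bound $u_\e'$ by duality. The one genuine addition in your write-up is the Galerkin justification of Step~2; the paper simply writes $d\dot W_t=\tfrac{dW_t}{dt}$ and proceeds formally, so your version is in fact more careful on this point.
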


%From Lemma \ref{l1} and Lemma \ref{l2}, we obtain the global existence and uniqueness of equation (\ref{1}) \cite{Prato}.
\section{Effective Approximation and Effective System}
In this section, we will prove several convergence results. Then, we establish effective approximation and derive the effective system.
\subsection{Some convergence results}
Let us first introduce some functions spaces. Let $Q=D\times(0,T)$ with $T\in \mathbb{R}_+^*.$
We consider the space
$$\mathcal{Y}(0,T)=\l\{\upsilon\in L^2\l(\Omega\times (0,T);H^{\a/2}_\rho(D)\r): \upsilon'\in L^2\l(\Omega\times (0,T);H_\rho^{-\a/ 2}(D)\r)\r\}$$
%where $\upsilon'$ stands for $\frac{d\upsilon}{dt}.$
provided with the norm
$$\|\upsilon\|^2_{\mathcal{Y}(0,T)}=\mathbb{E}\|\upsilon\|^2_{L^2\l(0,T;H^{\a/2}_\rho(D)\r)}+\mathbb{E}||\upsilon'||^2_{L^2\l(0,T ;H_\rho^{-\a/ 2}(D)\r)}$$
which makes it a Hilbert space.

\begin{definition}
  Let $E$ be a fundamental sequence. A sequence $(u_\e)_{\e\in E}\subset L^2(Q\times\Omega)$ is said to two-scale converge in $L^2(Q\times\Omega)$ to some $\tilde{u}\in L^2(Q\times\Omega; L^2_{per}(Y\times Z))$ if as
  $E\ni\e\rightarrow0,$
  \begin{small}
  $$\int_{Q\times\Omega}u_\e(x,t,\omega)\psi^\e(x,t,\omega)dxdtd{\mathbb{P}}\!\rightarrow\!\int_{Q\times\Omega\times Y\times Z}\tilde{u}(x,t,y,\tau,\omega)\psi(x,t,y,\tau,\omega))dxdtdyd\tau d{\mathbb{P}}$$
  \end{small}
  for all $\psi\in L^2(Q\times\Omega;\mathcal {C}_{per}(Y\times Z)),$ where $\psi^\e(x,t)=\psi(x,t,\frac{x}{\e},\frac{t}{\e},\omega).$
\end{definition}

\begin{lemma}\label{l3}
  Let $E$ be a fundamental sequence. $u^\e$ is a solution of equation (\ref{1}). Then, a subsequence $E'$ can be extracted from $E$ such that, as $E'\ni\e\rightarrow 0,$
  $$u_\e\rightarrow \tilde{u} \quad \text{in} \quad \mathcal{Y}(0,T)\text{-weakly}, $$
  \begin{center}
     $u_\e\rightarrow \tilde{u} \quad \text{in} \quad L^2(Q\times\Omega)\text{-two-scale}.$
  \end{center}
  Moreover, %let $$\psi_\e=\psi_0+\epsilon^{(1+\a)/2}\psi_1^\epsilon,\; i.e.,\;
%\psi_\e(x,t)=\psi_0(x,t)+\epsilon^{(1+\a)/2}\psi_1(x,t,\frac{x}{\epsilon},\frac{t}{\epsilon}),$$
%  where $$\psi_0\in \mathcal{M}(Q) \;and\; \psi_1\in \mathcal{M}(Q)\otimes[(\mathcal{C}_{per}(Y)/\mathbb{C})\otimes\mathcal{C}_{per}(Z)].$$
for a further subsequence $\e'\in E''$, we have
$$\mathcal{D}^*u_{\e'}\rightarrow D^*_x\tilde{u}+D^*_y u_1 \quad \text{in} \quad L^2(Q\times D\times\Omega) \text{-two-scale},$$
where $\tilde{u}\in \mathcal{Y}(0,T), u_1\in L^2(Q\times\Omega;L^2_{per}(Z;H_{\#}^{\a/2}(Y)).$
%\begin{small}
%  \begin{center}
%     $$\int_0^Ta^{\e'}(u_{\e'},\psi_{\epsilon'})dt\rightarrow \int_{Q\times D\times Y\times N \times Z}\Theta(y,\eta)(D^*_xu_0+D^*_yu_1)(\overline{D^*_x\psi_0}+\overline{D^*_y \psi_1})dxdzdtdyd\eta d\tau,$$
%  \end{center}
%  \end{small}
% where $\tilde{u}\in \mathcal{Y}(0,T), u_1\in L^2(Q\times\Omega;L^2_{per}(Z;H_{\#}^{\a/2}(Y)).$
\begin{proof}
Let $$\psi_\e=\psi_0+\epsilon^{(1+\a)/2}\psi_1^\epsilon,\; i.e.,\;
\psi_\e(x,t,\omega)=\psi_0(x,t,\omega)+\epsilon^{(1+\a)/2}\psi_1(x,t,\frac{x}{\epsilon},\frac{t}{\epsilon},\omega),$$
 where $$\psi_0\in \mathcal{M}(Q)\otimes L^2(\Omega) \;and\; \psi_1\in \mathcal{M}(Q)\otimes[(\mathcal{C}_{per}(Y)/\mathbb{C})\otimes\mathcal{C}_{per}(Z)]\otimes L^2(\Omega).$$
We set $$\phi_0(x,z,t,\omega)=(\overline{D^*_x\psi_0})(x,z,t,\omega),$$
 $$\phi_1(x,\frac{x}{\epsilon'},\frac{z}{\epsilon'},t,\frac{t}{\epsilon'},\omega)=(\overline{D^*_y \psi_1})(x,\frac{x}{\epsilon'},\frac{z}{\epsilon'},t
 ,\frac{t}{\epsilon'},\omega),$$ $$\phi(x,z,t,y,\eta,\tau,\omega)=\phi_0(x,z,t,\omega)+\phi_1(x,y,t,\eta,\tau,\omega).$$
 For convenient, we omit variables $\omega$ and $t.$ The functions $\phi_0(x,z,t,\omega)$ and $\phi_1(x,\frac{x}{\epsilon'},\frac{z}{\epsilon'},t,\frac{t}{\epsilon'},\omega)$ are abbreviated as $\phi_0(x,z)$ and $\phi_1(x,\frac{x}{\epsilon'},\frac{z}{\epsilon'})$ respectively.
 Due to Lemma \ref{l2}, one has a subsequence $E,$ such that
   $$u_\e\rightarrow \tilde{u} \quad \text{in} \quad \mathcal{Y}(0,T)\text{-weakly}.$$
 Then for a further subsequence $E''\ni \e',$ we have
\begin{equation*}
\begin{cases}
u_{\e'} \quad \text{two-scale \;converges\; to} \;u\in L^2(Q\times\Omega\times Y\times Z),\\
\mathcal{D}^*u_{\e'}\text{\quad  two-scale\; converges\; to }\;U\in L^2(Q\times D\times\Omega\times Y\times Z\times N).
\end{cases}
 \end{equation*}
Hence, for any $\psi_{\epsilon'}$. one has
\begin{small}
\begin{equation}\label{e2}
\begin{split}
 &\int_0^T\int_{\Omega}a^{\e'}(u_{\epsilon'},\psi_{\epsilon'})dtd{\mathbb{P}}=\int_0^T\int_{\Omega}\int_{D}\int_{D}\Theta^{\e'}\mathcal{D}^*u_{\epsilon'}(x,z)
 \overline{\mathcal{D}^*\psi_{\epsilon'}}(x,z)dxdzdtd{\mathbb{P}}\\
 &\rightarrow \int_{Q\times D\times\Omega\times Y\times N \times Z}\Theta(y,\eta)U(x,z,t,y,\eta,\tau,\omega)\phi(x,z,t,y,\eta,\tau,\omega)dxdzdtdyd\eta d\tau d{\mathbb{P}}.
\end{split}
\end{equation}
\end{small}
By the definition of $\mathcal{D}^*$ and $\mathcal{D},$ it follows that
\begin{small}
\begin{equation*}
\begin{split}
&\int_0^T\int_{\Omega}a^{\epsilon'}(u_{\epsilon'},\psi_{\epsilon'})dtd{\mathbb{P}}\\
&=\int_{Q\times D}\int_{\Omega}\Theta^{\epsilon'}(x,z)(\mathcal{D}^*u_{\e'})(x,z)
\l[(\overline{D^*_x\psi_0})(x,z)+(\overline{D^*_y \psi_1})(x,\frac{x}{\epsilon'},\frac{z}{\epsilon'})\r]dxdzdtd{\mathbb{P}}+o(\epsilon')\\
&=\int_{Q\times D}\int_{\Omega}(\mathcal{D}^*u_{\e'})(x,z)\l[\phi_0(x,z)\Theta^{\epsilon'}(x,z)
+\phi_1((x,\frac{x}{\epsilon'},\frac{z}{\epsilon'}))\Theta^{\epsilon'}(x,z)\r]dxdzdtd{\mathbb{P}}+o(\epsilon')\\
&=\Lambda_1^{\epsilon'}+\Lambda_2^{\epsilon'}+o(\epsilon').
 \end{split}
 \end{equation*}
\end{small}
 For the first part of the right side, on one hand,
 \[
\begin{split}
 \Lambda_1^{\epsilon'}&=\int_{Q\times D}\int_{\Omega}(\mathcal{D}^*u_{\e'})(x,z)
  \phi_0(x,z)\Theta^{\epsilon'}(x,z)dxdzdtd{\mathbb{P}}\\
  &=\int_{Q\times D}\int_{\Omega}u_{\e'}(x)\l[\phi_0(x,z)\Theta^{\epsilon'}(x,z)+\phi_0(z,x)\Theta^{\epsilon'}(z,x)\r]\gamma(x,z)dzdxdtd{\mathbb{P}}.
\end{split}
 \]
 Let $\epsilon'$ goes to $0$, we have
 \begin{small}
 \[
\begin{split}
  \Lambda_1^{\epsilon'}&\rightarrow\int_{Q\times D\times\Omega\times Y\times N\times Z}u(x,y)\Theta(y,\eta)[\phi_0(x,z)+\phi_0(z,x)]\gamma(x,z)dzdxdtd\eta dyd\tau d{\mathbb{P}}\\
  &=\int_{Q\times\Omega\times Y\times N\times Z}u(x,y)\Theta(y,\eta)(D_x\phi_0)(x)dxdtdyd\eta d\tau d{\mathbb{P}}.
\end{split}
\]
\end{small}
On the other hand, from the fact that $\mathcal{D}^*u_{\e'}$  two-scale converges to $U\in L^2(Q\times D\times\Omega\times Y\times Z\times N)$, we have
\begin{equation*}
\begin{split}
    \Lambda_1^{\epsilon'}&=\int_{Q\times  D}\int_{\Omega}\Theta^{\epsilon'}(x,z)\mathcal{D}^*u_{\e'}(x,z)
(\overline{D^*_x\psi_0})(x,z)dxdzdtd{\mathbb{P}}\\
&\rightarrow \int_{Q\times D\times\Omega\times Y\times N \times Z}\Theta(y,\eta)U(x,z,t,y,\eta,\tau)\phi_0(x,z)dxdzdtdyd\eta d\tau d{\mathbb{P}}.
\end{split}
\end{equation*}
Then we have
\begin{small}
\begin{equation}\label{100}
  \int_{Q\times D\times\Omega\times Y\times N \times Z}\Theta(y,\eta)(U(x,z,t,y,\eta,\tau)-\mathcal{D}_x^*u(x,y))\phi_0(x,z)dxdzdtdyd\eta d\tau d{\mathbb{P}}=0.
\end{equation}
\end{small}
For the second part,
\begin{small}
\[
\begin{split}
\Lambda_2^{\epsilon'}&=\int_{Q\times\Omega}\int_{D}(\mathcal{D}^*u_{\e'})(x,z)\Theta^{\epsilon'}(x,z)
\phi_1(x,\frac{x}{{\epsilon'}},\frac{z}{{\epsilon'}})dxdzdtd{\mathbb{P}}\\
&=\int_{Q\times\Omega}u_{\e'}(x)\int_{D}\l[\Theta^{\epsilon'}(x,z)
\phi_1(x,\frac{x}{{\epsilon'}},\frac{z}{{\epsilon'}})+\Theta^{\epsilon'}(z,x)
\phi_1(z,\frac{z}{{\epsilon'}},\frac{x}{{\epsilon'}})\r]\gamma(x,z)dxdzdtd{\mathbb{P}}\\
&=\int_{Q\times\Omega}u_{\e'}(x)\int_{D}\l[\Theta^{\epsilon'}(x,z)
\phi_1(x,\frac{x}{{\epsilon'}},\frac{z}{{\epsilon'}})+\Theta^{\epsilon'}(z,x)
\phi_1(x,\frac{z}{{\epsilon'}},\frac{x}{{\epsilon'}})\r]\gamma(x,z)dxdzdtd{\mathbb{P}}\\
\quad &+\int_{Q\times\Omega}u_{\e'}(x)\int_{D}\l[\Theta^{\epsilon'}(z,x)
\phi_1(z,\frac{z}{{\epsilon'}},\frac{x}{{\epsilon'}})-\Theta^{\epsilon'}(z,x)
\phi_1(x,\frac{z}{{\epsilon'}},\frac{x}{{\epsilon'}})\r]\gamma(x,z)dxdzdtd{\mathbb{P}}\\
&=\Lambda_3^{\epsilon'}+\Lambda_4^{\epsilon'},
\end{split}
\]
\end{small}
where
\[
\begin{split}
\Lambda_3^{\epsilon'}
&={\epsilon'}^{(1-\a)/2}\int_{Q\times\Omega}u_{\e'}(x)[D_y(\Theta\phi_1)(x)]^{\e'} dxdtd{\mathbb{P}},
\end{split}
\]
and
\[
\begin{split}
\Lambda_4^{\epsilon'}&={\epsilon'}^{(1+\a)/2}\int_{Q\times\Omega}u_{\e'}(x)\int_{D}\Theta^{\epsilon'}(z,x)
[\psi_1(z,\frac{x}{{\epsilon'}})-\psi_1(x,\frac{x}{{\epsilon'}})\\
&\quad+\psi_1(x,\frac{z}{{\epsilon'}})
-\psi_1(z,\frac{z}{{\epsilon'}})]\gamma^2(x,z)dzdxdtd{\mathbb{P}}\\
&\rightarrow 0.
\end{split}
\]
%By the fact that $(D_x\phi_1)^{\e'}\rightarrow 0,$ as $\epsilon'\rightarrow 0,$
We have
\begin{small}
\[
\begin{split}
\lim\limits_{\epsilon'\rightarrow 0}\int_0^T\int_{\Omega}a^{{\e'}}(u_{\e'},\psi_{\e'})dtd\mathbb{P}
&=\int_{Q\times\Omega\times Y\times N\times Z}u(x,y)\Theta(y,\eta)(D_x\phi_0)(x)dxdtdyd\eta d\tau d\mathbb{P}\\
&\quad+\lim\limits_{\epsilon'\rightarrow 0}\epsilon'^{(1-\a)/2}\int_{Q\times\Omega}u_{\e'}(x)[D_y(\Theta\phi_1)(x)]^{\e'} dxdtd\mathbb{P}.
\end{split}
\]
\end{small}
 By the two-scale convergence of $u_{\e'},$
 $$\int_{Q\times\Omega} \int_{Y \times Z}u(x,y)D_y(
 \Theta\phi_1)(x,t,y,\tau)dxdtdyd\tau d\mathbb{P}=0.$$
 This yields in paticular for any $\phi$
 $$\int_{Q\times\Omega\times Y\times N \times Z}(\mathcal{D}_y^*u)(x,y)(\Theta\phi_1)(x,t,y,\eta,\tau)dxdtdyd\eta d\tau d\mathbb{P}=0,$$
 hence,
 $$(\mathcal{D}_y^*u)(x,y)=0,$$
 which means that $u$ does not depend on $y.$ Then $u=\tilde{u}.$
 Now, we set $D_y(\Theta\phi_1)=0,$ we get
 \begin{small}
  \begin{equation*}
   \begin{split}
  \lim\limits_{\epsilon'\rightarrow 0}&\int_0^T\int_{\Omega}a^{{\e'}}(u_{\e'},\psi_{\e'})dtd\mathbb{P}
  =\int_{Q\times\Omega \times Y\times N\times Z}u(x,y)\Theta(y,\eta)(D_x\phi)(x)dxdtdyd\eta d\tau d\mathbb{P}\\
  &=\int_{Q\times D\times\Omega\times Y\times N \times Z}\Theta(y,\eta)D^*_x\tilde{u}(x,t,z)\phi(x,t,z,y,\eta,\tau)dzdxdtdy d\eta d\tau d\mathbb{P}.
  \end{split}
 \end{equation*}
 \end{small}
 We get that
 \begin{footnotesize}
 $$\int_{Q\times D\times\Omega \times Y\times N \times Z}\l(U(x,t,z,y,\tau,\eta)-D^*_x\tilde{u}(x,t,z)\r)\Theta(y,\eta)\phi(x,t,z,y,\eta,\tau)dzdxdtdy d\eta d\tau d\mathbb{P}=0.$$
 \end{footnotesize}
 From the formula (\ref{100}), we deduce that
  \begin{footnotesize}
  $$\int_{Q\times D\times\Omega\times Y\times N \times Z}(U(x,t,z,y,\tau,\eta)-D^*_x\tilde{u}(x,t,z))\Theta(y,\eta)\phi_1(x,t,y,\eta,\tau)dzdxdtdy d\eta d\tau d\mathbb{P}=0.$$
 \end{footnotesize}
 Since the fact that $D_y(\Theta\phi_1)=0,$ we can deduce that there exists a unique function $u_1\in L^2(Q\times\Omega;L_{per}^2(Z;H_{\#}^{\a/2}(Y)))$ such that
 $$U(x,t,z,y,\tau,\eta,\omega)-(D^*_x\tilde{u})(x,t,z,\omega)=(D^*_yu_1)(x,t,y,\tau,\eta,\omega).$$
 This ends the proof of Lemma \ref{l3}.
\end{proof}
\end{lemma}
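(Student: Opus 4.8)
I would argue by a compactness-and-identification scheme of periodic (stochastic) two-scale homogenization type: first extract weak and two-scale limits from the uniform bounds of Lemma~\ref{l2}, then determine these limits by testing the bilinear form $a^\e$ against oscillating test functions and exploiting the scale separation. \emph{(Compactness.)} Since $\mathcal{Y}(0,T)$ is a Hilbert space, the bound $\sup_\e\|u_\e\|_{\mathcal{Y}(0,T)}\le c$ from Lemma~\ref{l2} gives a subsequence $E'$ along which $u_\e\rightharpoonup\tilde u$ weakly in $\mathcal{Y}(0,T)$, with $\tilde u\in\mathcal{Y}(0,T)$. The same bound controls $u_\e$ in $L^2(Q\times\Omega)$ and, through $\l(-(-\D)^{\a/2}u,u\r)_{L^2(D)}=\|u\|_{H^{\a/2}_\rho(D)}^2$ together with the positivity and boundedness of $\Theta^\e$, controls $\mathcal{D}^*u_\e$ in $L^2(Q\times D\times\Omega)$. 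The two-scale compactness theorem (in the stochastic setting) then yields a further subsequence $E''$ along which $u_{\e'}$ two-scale converges to some $u\in L^2(Q\times\Omega;L^2_{per}(Y\times Z))$ and $\mathcal{D}^*u_{\e'}$ two-scale converges to some $U\in L^2(Q\times D\times\Omega;L^2_{per}(Y\times N\times Z))$; integrating $u$ over the micro-variables recovers the weak limit, while the $H^{-\a/2}_\rho$-bound on $u_\e'$ excludes genuine fast oscillation in $t$.

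\emph{(Identification.)} For $\psi_0\in\mathcal{M}(Q)\otimes L^2(\Omega)$ and $\psi_1\in\mathcal{M}(Q)\otimes[(\mathcal{C}_{per}(Y)/\mathbb{C})\otimes\mathcal{C}_{per}(Z)]\otimes L^2(\Omega)$ I would test with $\psi_\e=\psi_0+\e^{(1+\a)/2}\psi_1^\e$; the exponent $(1+\a)/2$ is dictated by the homogeneity $\gamma(x/\e,z/\e)=\e^{(1+\a)/2}\gamma(x,z)$, which makes $\e^{(1+\a)/2}\mathcal{D}^*\psi_1^\e$ reduce, at leading order, to the microscopic difference operator $\mathcal{D}_y^*\psi_1$ sampled at $(x/\e,z/\e)$. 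Expanding $a^{\e'}(u_{\e'},\psi_{\e'})$ (integrated in time and over $\Omega$) via the definitions of $\mathcal{D}$ and $\mathcal{D}^*$ and regrouping powers of $\e'$ produces a macroscopic term $\Lambda_1^{\e'}$, a term $\Lambda_3^{\e'}=\e'^{(1-\a)/2}\int_{Q\times\Omega}u_{\e'}\,[D_y(\Theta\phi_1)]^{\e'}$ carrying the diverging factor $\e'^{(1-\a)/2}$, and a remainder $\Lambda_4^{\e'}$ of order $\e'^{(1+\a)/2}$. Computing the limit of $\Lambda_1^{\e'}$ in two ways — via $\mathcal{D}^*$-duality and the two-scale convergence of $u_{\e'}$ (plus the averaging of $\Theta^{\e'}$), and via the two-scale limit $U$ — gives the constraint $\int\Theta(y,\eta)\,(U-\mathcal{D}_x^*u)\,\phi_0=0$. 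Since $a^{\e'}(u_{\e'},\psi_{\e'})$ remains bounded while $\e'^{(1-\a)/2}\to\infty$ (this is where $\a>1$ is essential), the two-scale limit of $\int u_{\e'}[D_y(\Theta\phi_1)]^{\e'}$ must vanish, so $\int_{Y\times Z}u\,D_y(\Theta\phi_1)=0$; integrating by parts in $y$ gives $\mathcal{D}_y^*u=0$, hence $u$ is independent of $y$ and $u=\tilde u$.

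\emph{(Corrector.)} Restricting now to those $\phi_1$ with $D_y(\Theta\phi_1)=0$ annihilates $\Lambda_3^{\e'}$, so the limit of $a^{\e'}(u_{\e'},\psi_{\e'})$ reduces to $\int\Theta(y,\eta)\,\mathcal{D}_x^*\tilde u\,\phi$; matching this with the $U$-expression and combining with the earlier constraint gives $\int\Theta(y,\eta)\,(U-\mathcal{D}_x^*\tilde u)\,\phi_1=0$ for all admissible $\phi_1$ in that subspace. Reading this as an orthogonality relation and invoking solvability of the cell problem $\mathcal{D}_y(\Theta\mathcal{D}_y^*\,\cdot\,)=\cdot$ on $H^{\a/2}_\#(Y)$ — the same fact used in the remark following Lemma~\ref{l1} — I would conclude that $U-\mathcal{D}_x^*\tilde u$ lies in the range of $\mathcal{D}_y^*$ on $L^2(Q\times\Omega;L^2_{per}(Z;H^{\a/2}_\#(Y)))$, so there is a unique $u_1$ there with $U=\mathcal{D}_x^*\tilde u+\mathcal{D}_y^*u_1$, which is the assertion.

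\emph{(Main obstacle.)} I expect the delicate part to be the expansion of $a^{\e'}(u_{\e'},\psi_{\e'})$ and, within it, the proof that $\Lambda_4^{\e'}\to0$ in spite of the non-integrable kernel $\gamma^2(x,z)=|z-x|^{-1-\a}$. This rests on a double-difference cancellation: because $\psi_1$ is smooth and compactly supported, the bracket $\psi_1(z,\tfrac{x}{\e'})-\psi_1(x,\tfrac{x}{\e'})+\psi_1(x,\tfrac{z}{\e'})-\psi_1(z,\tfrac{z}{\e'})$ multiplying $\gamma^2$ is $O(\min(|z-x|,\,|z-x|^2/\e'))$, which together with $\a\in(1,2)$ leaves a prefactor $\e'^{(3-\a)/2}\to0$. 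A secondary technical point is to make the two-scale compactness statement precise over $\Omega\times Q$ for $\mathcal{D}^*u_{\e'}$, which is defined on $D\times D$ rather than on $D$, so that the limit space $L^2_{per}(Y\times N\times Z)$ and the right class of admissible test functions appear, and to verify that the weak $\mathcal{Y}(0,T)$-limit and the two-scale limit coincide.
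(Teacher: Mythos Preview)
Your proposal is correct and follows essentially the same approach as the paper: the same oscillating test functions $\psi_\e=\psi_0+\e^{(1+\a)/2}\psi_1^\e$, the same decomposition of $a^{\e'}(u_{\e'},\psi_{\e'})$ into a macroscopic piece $\Lambda_1^{\e'}$, a diverging piece $\Lambda_3^{\e'}=\e'^{(1-\a)/2}\int u_{\e'}[D_y(\Theta\phi_1)]^{\e'}$, and a vanishing remainder $\Lambda_4^{\e'}$, and the same two-step identification (first $u=\tilde u$ via the blow-up of $\e'^{(1-\a)/2}$, then the corrector $u_1$ by restricting to $D_y(\Theta\phi_1)=0$). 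Your discussion of the ``main obstacle'' --- the double-difference cancellation needed for $\Lambda_4^{\e'}\to0$ --- is in fact more explicit than the paper, which simply asserts $\Lambda_4^{\e'}\to0$ without detailing the cancellation against the singular kernel.
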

\begin{remark}
   This setting for the two scale convergence method has a very unique feature in that, the limit of the sequence depends on additional variable which does not appear in the weak limit.
\end{remark}
\begin{remark}
  If the limit  in Lemma \ref{l3} can be shown to be unique then convergence of the whole sequence occurs.
\end{remark}

\begin{lemma}\label{l4}
  For all $\psi_0\in \mathcal{M}(Q)\times L^2(\Omega),$ for the subsequence $E''$ in Lemma \ref{l3}, we have
  \begin{footnotesize}
  \begin{align*}
  \int_{Q\times\Omega} \e^{(1-\a)/2}u_\e\mathcal{V}^\epsilon(x,t)\psi_0(x,t,\omega) dxdtd\mathbb{P}&\rightarrow2\int_{Q\times\Omega\times Y\times Z}u_1(x,t,y,\tau)\psi_0\mathcal{V}(y,\tau)dxdtdyd\tau d\mathbb{P}.\\
  %&+2\Upsilon\int_0^T\l(\int_{\mathbb{R}}\gamma(x,z)dz,
%    \tilde{u}(x)\psi_0(x)\r),
  \end{align*}
  \end{footnotesize}
%where $$\Upsilon=\int_{Y\times N \times Z}\Theta(y,\eta)\mathcal{D}_y^*\xi d\tau dyd\eta.$$
  \begin{proof}
  Since the fact that the mean value of $\mathcal{V}$ for spatial variable is null, the equation
\begin{equation*}
 \mathcal{D}_y(\mathcal{D}_y^*\xi)=\mathcal{V}
\end{equation*}
admits a unique solution $\xi$ in $L^2_{per}(Z;H_{\#}^{\a/2}(Y))$. Let $\xi^\e(x,t)=\xi(\frac{x}{\epsilon},\frac{t}{\epsilon})$. For all $\e>0,$
   we can conclude
  \begin{align*}
    \int_{Q\times\Omega} \epsilon^{(1-\a)/2}u_\e\mathcal{V}^\epsilon(x,t)\psi_0 dxdtd\mathbb{P}=\e^{(1-\a)/2}\int_{Q\times\Omega}u_\e\psi_0 [\mathcal{D}_y(\mathcal{D}_y^*\xi)]^\epsilon dxdtd\mathbb{P}.
  \end{align*}
  Since the fact that, for every function $\Phi^\epsilon$, we have $\mathcal{D}\Phi^\epsilon=\epsilon^{\frac{1-\a}{2}}(D_y\Phi)^\e.$
  Let $\Phi=\mathcal{D}_y^*\xi$, we have
  \begin{align*}
    &\quad\int_{Q\times\Omega} \epsilon^{(1-\a)/2}u_\e\mathcal{V}^\epsilon(x,t)\psi_0 dxdtd\mathbb{P}
    =\int_0^T\int_{\Omega}\int_{\mathbb{R}}u_\e\psi_0 \mathcal{D}(\mathcal{D}_y^*\xi)^\epsilon dxdtd\mathbb{P}\\
    &=\int_0^T\int_{\Omega}\int_{\mathbb{R}}\int_{\mathbb{R}}\mathcal{D}^*(u_\e\psi_0)(\mathcal{D}_y^*\xi)^\epsilon dxdzdtd\mathbb{P}\\
    &=\int_0^T\int_{\Omega}\int_{\mathbb{R}}\int_{\mathbb{R}}[\mathcal{D}^*(u_\e)\psi_0(x)+\mathcal{D}^*(\psi_0)u_\epsilon(z)](\mathcal{D}_y^*\xi)^\epsilon dxdzdtd\mathbb{P}\\
    &\rightarrow\int_0^T\int_{\Omega}\int_{D}\int_{D}\int_{Y\times N \times Z}(\mathcal{D}^*\tilde{u}+\mathcal{D}_y^*u_1)\psi_0\mathcal{D}_y^*\xi dtdxdzd\tau dyd\eta d\mathbb{P}\\
    &+\int_0^T\int_{\Omega}\int_{D}\int_{D}\int_{Y\times N \times Z}\mathcal{D}^*\psi_0\tilde{u}(z)\mathcal{D}_y^*\xi dtdxdzd\tau dyd\eta d\mathbb{P}\\
    &+2\int_0^T\int_{\Omega}\int_{D}\int_{D^c}\int_{Y\times N \times Z}\tilde{u}(x)\psi_0(x)\gamma(x,z)\mathcal{D}_y^*\xi dtdxdzd\tau dyd\eta d\mathbb{P}\\
    &=|D|\int_0^T\int_{\Omega}\int_{D}\int_{Y \times Z}u_1\psi_0\mathcal{V}(y,\tau)dtdxd\tau dyd\mathbb{P}\\
    &+2\int_{Y\times N \times Z}\mathcal{D}_y^*\xi d\tau dyd\eta\int_0^T\int_{\Omega}\int_{\mathbb{R}}\int_{\mathbb{R}}\tilde{u}(x)\psi_0(x)\gamma(x,z)dxdzdtd\mathbb{P}\\
    &=2\int_0^T\int_{\Omega}\l(\int_{Y\times Z}u_1\mathcal{V}(y,\tau)dyd\tau,\psi_0(x)\r)dtd\mathbb{P}.
  \end{align*}
  Hence the conclusion in this lemma follows.
  \end{proof}
\end{lemma}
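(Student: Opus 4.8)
The plan is to recognize the singular oscillating term $\e^{(1-\a)/2}\mathcal{V}^\e u_\e$ as a nonlocal divergence of a corrector, integrate by parts to transfer the divergence onto the test function, and then pass to the two-scale limit using Lemma \ref{l3}. First I would solve the cell (corrector) problem: since $\mathcal{V}(\cdot,\tau)$ has vanishing mean over $Y$ for a.e.\ $\tau\in Z$, the nonlocal elliptic equation $\mathcal{D}_y(\mathcal{D}_y^*\xi)=\mathcal{V}$ has a unique solution $\xi\in L^2_{per}(Z;H^{\a/2}_\#(Y))$; this is the same Lax--Milgram / nonlocal Poincar\'{e} argument used in the illustrative remark following Lemma \ref{l1} (there with the weight $\Theta$, here with weight $1$), carried out for a.e.\ fixed $\tau$.

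Next, using the homogenization scaling identity $\mathcal{D}\Phi^\e=\e^{(1-\a)/2}(\mathcal{D}_y\Phi)^\e$ with $\Phi=\mathcal{D}_y^*\xi$, I rewrite $\e^{(1-\a)/2}\mathcal{V}^\e=\mathcal{D}\big[(\mathcal{D}_y^*\xi)^\e\big]$; then the adjoint relation between $\mathcal{D}$ and $\mathcal{D}^*$ together with the Leibniz rule $\mathcal{D}^*(u_\e\psi_0)=\psi_0(x)\mathcal{D}^*u_\e+u_\e(z)\mathcal{D}^*\psi_0$ gives
\[
\int_{Q\times\Omega}\e^{(1-\a)/2}u_\e\mathcal{V}^\e\psi_0\,dxdtd\mathbb{P}=\int_0^T\!\!\int_\Omega\!\int_{\mathbb{R}}\!\int_{\mathbb{R}}\big[\psi_0(x)\mathcal{D}^*u_\e+u_\e(z)\mathcal{D}^*\psi_0\big](\mathcal{D}_y^*\xi)^\e\,dxdzdtd\mathbb{P}.
\]
Here $\mathcal{D}^*u_\e$ and $u_\e$ are bounded in $\e$ by Lemma \ref{l2}, and $(\mathcal{D}_y^*\xi)^\e$ is the fixed oscillating corrector profile, so the right-hand side is now amenable to a two-scale limit.

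Passing to that limit along the subsequence $E''$ of Lemma \ref{l3} --- where $u_\e\to\tilde u$ two-scale with $\tilde u$ independent of $y$, and $\mathcal{D}^*u_\e\to D^*_x\tilde u+D^*_yu_1$ two-scale, while $\psi_0$ and $\mathcal{D}^*\psi_0$ are genuine test functions --- and splitting $\mathbb{R}\times\mathbb{R}$ according to membership in $D$ or $D^c$ (using $u_\e|_{D^c}\equiv0$), I obtain a sum of limit terms. The pieces carrying $D^*_x\tilde u$, $\mathcal{D}^*\psi_0$ and the boundary ($D\times D^c$) contribution recombine, by the antisymmetry of $\gamma$, into an expression proportional to $\big(\int_{Y\times N\times Z}\mathcal{D}_y^*\xi\big)\int_{\mathbb{R}}\int_{\mathbb{R}}\tilde u(x)\psi_0(x)\gamma(x,z)\,dxdz$, which vanishes because $\int_{\mathbb{R}}\gamma(x,z)\,dz=0$ ($\gamma(x,\cdot)$ is odd about $x$, in the principal-value sense). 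In the surviving term, $D^*_yu_1$ is paired with $D^*_y\xi$ over $Y\times N$; a nonlocal integration by parts on the torus $Y$ and the cell equation turn this pairing into $\int_Y u_1\,\mathcal{V}(\cdot,\tau)$, after which the remaining $z$-integral is free and produces the factor $|D|=2$, leaving precisely $2\int_{Q\times\Omega\times Y\times Z}u_1\psi_0\mathcal{V}$.

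The main obstacle is this last limit passage in the product $\int\!\int\mathcal{D}^*u_\e\,(\mathcal{D}_y^*\xi)^\e$: the corrector profile $\mathcal{D}_y^*\xi$ lies only in $L^2_{per}$ (not in a space of continuous periodic functions), so it is not directly admissible in the definition of two-scale convergence. One must approximate $\xi$ by smooth $Y\times Z$-periodic functions, control the resulting error uniformly in $\e$ using the isometry $\|\mathcal{D}_y^*v\|_{L^2}=\|v\|_{H^{\a/2}_\#(Y)}$ together with the $\mathcal{Y}(0,T)$-bound of Lemma \ref{l2}, and then let the approximation parameter tend to $0$. Tracking the borderline integrability of $\gamma$ (at the diagonal, where the weighted space $H^{\a/2}_\rho$ intervenes, and at infinity, where only antisymmetry rescues the boundary term) is the remaining delicate point; note that the zero-spatial-mean hypothesis on $\mathcal{V}$ is exactly what makes the cell problem solvable and is thus indispensable.
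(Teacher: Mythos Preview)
Your proposal is correct and follows essentially the same route as the paper: solve the cell problem $\mathcal{D}_y(\mathcal{D}_y^*\xi)=\mathcal{V}$, use the scaling identity $\mathcal{D}\Phi^\e=\e^{(1-\a)/2}(\mathcal{D}_y\Phi)^\e$ to remove the singular prefactor, integrate by parts via the $\mathcal{D}/\mathcal{D}^*$ duality and the nonlocal Leibniz rule, and then pass to the two-scale limit along $E''$ using Lemma~\ref{l3}; the recombination of the $D^*_x\tilde u$, $\mathcal{D}^*\psi_0$ and $D\times D^c$ pieces into a term that vanishes by the oddness of $\gamma$, and the identification $|D|=2$ in the surviving $D^*_yu_1$ term, are exactly what the paper does. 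Your added remark that $(\mathcal{D}_y^*\xi)^\e$ is a priori only $L^2_{per}$ and hence requires a density/approximation step to serve as a two-scale test function is a genuine technical point that the paper's proof passes over silently.
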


\subsection{Effective approximation theorem}
In this section, we will verify the main result that gives the effective approximation of equation (\ref{1}) and effective system.

Let us introduce some functions spaces. We consider the space
$$\mathbb{F}_0^1=\mathcal{Y}(0,T)\times L^2(Q\times\Omega;L^2_{per}(Z;H_{\#}^{\a/2}(Y)),$$
provided with the norm
$$||u||^2_{\mathbb{F}_0^1}=||\tilde{u}||^2_{\mathcal{Y}(0,T)}+||u_1||^2_{L^2(Q\times\Omega;L^2_{per}(Z;H_{\#}^{\a/2}(Y))},$$
which makes it Hilbert space. We consider also the space
$$\mathcal{F}_0^\infty=\mathcal{M}(Q)\otimes L^2(\Omega)\times[\mathcal{M}(Q)\otimes L^2(\Omega)\otimes (\mathcal{C}_{per}(Y)/\mathbb{C})\otimes\mathcal{C}_{per}(Z)],$$
which is a dense subspace of $\mathbb{F}_0^1.$ For $\bf{u}$$=(\tilde{u},u_1)$ and $\bf{v}$ $=(v_0,v_1)\in L^2(\Omega;H_\rho^{\a/2}(D))\times L^2(D\times\Omega;L^2_{per}(Z;H_{\#}^{\a/2}(Y)),$ we set
\begin{center}
  $a(\bf{u},v)$$=\int_{D\times D\times Y\times N \times Z}\Theta(y,\eta)(D^*_x\tilde{u}+D^*_yu_1)(\overline{D^*_xv_0}+\overline{D^*_yv_1})dxdzdyd\eta d\tau.$
\end{center}
%From the assumption of function $g,f,\Theta$ and  $\mathcal{V}$, we have the following lemma.

We give the uniqueness by the following lemma \cite{Gaw}.
\begin{lemma}\label{l5}
  Suppose $f\in L^2(0,T;L^2(D))$, the variational problem
  \begin{equation}\label{e5}
   \begin{cases}
    $$\textbf{u}=(\tilde{u},u_1)\in\mathbb{F}_0^1\; with\; \tilde{u}(0)=h$$\\
    \begin{split}
    i\int_0^T\int_{\Omega}\l<\tilde{u}'(t),\overline{v_0}(t)\r>dtd\mathbb{P}&=\frac{1}{2}\int_0^T\int_{\Omega}a(\textbf{u}(t),\textbf{v}(t))dtd\mathbb{P}\\
    &+\int_0^T\int_{\Omega}(g(\tilde{u}),v_0)dW_td\mathbb{P}+\int_0^T(f(t),v_0(t))dtd\mathbb{P}\\
    &+2\int_0^T\int_{\Omega}(\int_{Y\times Z}u_1\mathcal{V}(y,\tau)dyd\tau,v_0(x))_{L^2(D)}dtd\mathbb{P}\\
    %&+2\Upsilon\int_0^T\l(\int_{\mathbb{R}}\gamma(x,z)dz,
%    \tilde{u}(x)v_0(x)\r)\\
    &+\int_{Q}\int_{\Omega}\int_{Y\times N}\rho(x)\Theta(y,\eta)\tilde{u}(x)\overline{v_0}(x)dxdtdyd\eta d\mathbb{P}\\
    \text{for all} \;\textbf{v}=(v_0,v_1)\in\mathbb{F}_0^1\\
    \end{split}
   \end{cases}
  \end{equation}
  admits at most one solution($\l<,\r>$ is the dual pairing between $H_\rho^{\a/2}$ and $H_\rho^{-\a/2}$).
\end{lemma}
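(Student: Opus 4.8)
The plan is to argue by linearity: suppose $\textbf{u}^{(1)}=(\tilde u^{(1)},u_1^{(1)})$ and $\textbf{u}^{(2)}=(\tilde u^{(2)},u_1^{(2)})$ are two solutions of \eqref{e5} with the same data $h$ and $f$, and set $\textbf{w}=(\tilde w,w_1)=\textbf{u}^{(1)}-\textbf{u}^{(2)}$. Because $g$ satisfies the Lipschitz condition and $\tilde u^{(j)}$ appears in the stochastic integral through $g(\tilde u^{(j)})$, the difference equation reads, for all $\textbf{v}=(v_0,v_1)\in\mathbb{F}_0^1$,
\[
\begin{split}
i\int_0^T\!\!\int_{\Omega}\l<\tilde w'(t),\overline{v_0}(t)\r>dtd\mathbb{P}
&=\tfrac12\int_0^T\!\!\int_{\Omega}a(\textbf{w}(t),\textbf{v}(t))dtd\mathbb{P}
+\int_0^T\!\!\int_{\Omega}\l(g(\tilde u^{(1)})-g(\tilde u^{(2)}),v_0\r)dW_td\mathbb{P}\\
&\quad+2\int_0^T\!\!\int_{\Omega}\l(\int_{Y\times Z}w_1\mathcal{V}(y,\tau)dyd\tau,v_0\r)_{L^2(D)}dtd\mathbb{P}\\
&\quad+\int_{Q}\!\!\int_{\Omega}\int_{Y\times N}\rho(x)\Theta(y,\eta)\tilde w(x)\overline{v_0}(x)dxdtdyd\eta d\mathbb{P},
\end{split}
\]
with $\tilde w(0)=0$ and $f$ having cancelled.

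The first step is to eliminate the oscillatory component $w_1$ by an appropriate choice of test function. Taking $v_0=0$ and letting $v_1$ range over $L^2(Q\times\Omega;L^2_{per}(Z;H_{\#}^{\a/2}(Y)))$, only the $a(\textbf{w},\textbf{v})$ term survives, and we obtain
\[
\int_{Q\times\Omega\times Y\times N\times Z}\Theta(y,\eta)\l(D^*_x\tilde w+D^*_yw_1\r)\overline{D^*_yv_1}\,dxdzdtdyd\eta d\tau d\mathbb{P}=0 .
\]
This is a cell-problem identity: for a.e.\ $(x,t,\omega)$ it forces $w_1(x,t,\cdot,\cdot,\omega)$ to be the unique corrector associated with $D^*_x\tilde w(x,t,\cdot,\omega)$, so $w_1$ is a bounded linear function of $\tilde w$, and in particular $\|w_1\|^2 \le C\|\tilde w\|^2_{\mathcal{Y}(0,T)}$ with $C$ depending only on the bounds on $\Theta$ and the nonlocal Poincar\'e inequality. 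The key point is that, once this substitution is made, the bilinear form $\textbf{v}\mapsto a(\textbf{w},\textbf{v})$ evaluated on $\textbf{v}$ with this same corrector relation becomes a coercive form in $\tilde w$ alone — this is exactly the homogenized form $-2\Xi_1(-\Delta)^{\a/2}$ plus the term coming from $\Xi_2$ — so that $\tfrac12 a(\textbf{w},\textbf{w})\ge \kappa\|\tilde w\|^2_{H^{\a/2}_\rho(D)}$ for some $\kappa>0$ (the $\rho$-weighted boundary term from the last line of \eqref{e5} provides exactly the missing boundary contribution to close the $H^{\a/2}_\rho$ norm).

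The second step is the energy estimate. Take $v_0=\tilde w$ (and $v_1$ the associated corrector so that $\textbf{v}=\textbf{w}$), apply the It\^o formula to $\|\tilde w(t)\|^2_{L^2(D)}$ — exactly as in the proof of Lemma \ref{l2} — and take real parts. The self-adjoint term $\mbox{Re}\,i\cdot(\text{coercive})$ vanishes, the deterministic forcing is gone, and we are left with
\[
\mathbb{E}\|\tilde w(t)\|^2_{L^2(D)}
= \mathbb{E}\int_0^t\l\|g(\tilde u^{(1)}(s))-g(\tilde u^{(2)}(s))\r\|^2_{L^2(D)}ds
\le L^2\,\mathbb{E}\int_0^t\|\tilde w(s)\|^2_{L^2(D)}ds,
\]
using the It\^o isometry and the Lipschitz bound on $g$ (the cross term $\mbox{Im}(g(\tilde u^{(1)})-g(\tilde u^{(2)}),\tilde w)dW_s$ has zero expectation). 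Gronwall's inequality then gives $\mathbb{E}\|\tilde w(t)\|^2_{L^2(D)}=0$ for all $t$, hence $\tilde u^{(1)}=\tilde u^{(2)}$ in $\mathcal{Y}(0,T)$; feeding this back into the cell-problem identity of the first step gives $w_1=0$ as well, so $\textbf{u}^{(1)}=\textbf{u}^{(2)}$ in $\mathbb{F}_0^1$.

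The main obstacle is the first step: one must verify that eliminating $w_1$ via the cell problem really does produce a form that is coercive in $\tilde w$ and, more delicately, that the low-order coupling term $2(\int_{Y\times Z}w_1\mathcal{V}\,dyd\tau,\tilde w)$ — which is \emph{not} obviously sign-definite — can be absorbed. One handles this by recalling that $w_1$ solves the same cell problem as the corrector $\chi$ defining $\Xi_2,\Xi_3$, rewriting the $\mathcal{V}$-coupling through the identity $\mathcal{V}=\mathcal{D}_y(\mathcal{D}_y^*\xi)$ used in Lemma \ref{l4}, integrating by parts in the fast variable, and estimating the resulting expression by $\eta\|\tilde w\|^2_{H^{\a/2}_\rho}+C_\eta\|\tilde w\|^2_{L^2}$ with $\eta<\kappa$; the $L^2$ remainder is then harmless since it is dominated in the Gronwall step. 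Everything else is a routine repetition of the It\^o–Gronwall argument of Lemma \ref{l2}.
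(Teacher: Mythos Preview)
The paper gives no self-contained proof of this lemma; it simply cites \cite{Gaw}. Your two-step plan --- first eliminate $w_1$ through the cell problem by taking $v_0=0$ in the difference of \eqref{e5}, then run It\^o's formula on $\|\tilde w(t)\|^2_{L^2(D)}$ and close with Gronwall exactly as in the proof of Lemma~\ref{l2} --- is the natural route, and in outline it is what any variational-SPDE uniqueness argument would do.

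There is, however, a genuine gap in your final paragraph, and it is a Schr\"odinger-versus-parabolic confusion. In Step~2 you correctly note that $\tfrac12\,a(\textbf{w},\textbf{w})$ is real and therefore contributes nothing to $\tfrac{d}{dt}\|\tilde w\|^2_{L^2}$ (the Schr\"odinger mechanism: $\mathrm{Re}(i\cdot\text{real})=0$). But then, to dispose of the coupling term $2\bigl(\int_{Y\times Z}w_1\mathcal V\,dyd\tau,\tilde w\bigr)$, you propose to bound it by $\eta\|\tilde w\|^2_{H^{\a/2}_\rho}+C_\eta\|\tilde w\|^2_{L^2}$ and ``absorb'' the first piece using $\eta<\kappa$. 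There is no $\kappa\|\tilde w\|^2_{H^{\a/2}_\rho}$ available to absorb into: you have just discarded the coercive term. This absorption trick is parabolic reasoning and does not survive in a dispersive equation. The integration by parts via $\mathcal V=\mathcal D_y(\mathcal D_y^*\xi)$ leads back to $-\Xi_3(\zeta_{\tilde w},\tilde w)$ and changes nothing.

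What the It\^o--Gronwall step actually requires is $\bigl|\mathrm{Im}\,(L\tilde w,\tilde w)_{L^2}\bigr|\le C\|\tilde w\|_{L^2}^2$, where $L$ is the effective spatial operator obtained after your Step~1 substitution (equivalently, the operator in \eqref{e7}). The fractional Laplacian piece and the $\rho\Theta$ piece are self-adjoint; a short symmetrisation using the antisymmetry of $\gamma$ shows that $(\mathcal D|_D(\zeta_{\tilde w}),\tilde w)_{L^2}=|D|\,\|\zeta_{\tilde w}\|_{L^2}^2$ is real as well. The only possibly non-real contribution is $-\Xi_3(\zeta_{\tilde w},\tilde w)_{L^2}$, and since $\zeta_{\tilde w}$ carries a genuine fractional derivative of $\tilde w$ through the singular kernel $\gamma$, its imaginary part is not bounded by $\|\tilde w\|_{L^2}^2$ in any obvious way. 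To make the displayed identity in your Step~2 literally true you must show this pairing is real --- i.e.\ that the full effective operator is self-adjoint --- or else run the uniqueness argument with a different energy; your proposal does neither.
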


Next, we will show $\textbf{u}=(\tilde{u},u_1),$ where $\tilde{u},u_1$ is defined in Lemma \ref{l3}.
\begin{theorem}(Effective approximation)
Suppose the hypotheses of Lemma \ref{l1} and Lemma \ref{l3} are satisfied. For $\e>0,$ let $u_\e$ be the solution of equation (\ref{1}). Then, as $\e\rightarrow0,$ we have
\begin{equation}\label{110}
  u_\e\rightarrow \tilde{u} \quad \text{in} \quad \mathcal{Y}(0,T)\text{-weakly},
\end{equation}
  \begin{equation}\label{111}
  u_\e\rightarrow \tilde{u} \quad \text{in} \quad L^2(\l(0,T\r)\times D\times\Omega)\text{-strongly},
    \end{equation}
  where, $(\tilde{u},u_1)=\textbf{u} \in \mathbb{F}_0^1$ is the unique solution of equation (\ref{e5}).
\begin{proof}

Thanks to the Lemma \ref{l3}, there are some subsequence $E'$ extracted from $E$ and some vector function $\textbf{u}=(\tilde{u},u_1)\in \mathbb{F}_0^1$ such that the convergence is satisfied when $E'\ni \e\rightarrow0.$

Thus, according to Lemma \ref{l5}, the theorem is certainly proved if we can show that $\textbf{u}$ vertifies equation (\ref{e5}).

Indeed, we begin by vertifying that $\tilde{u}(0)=h.$
Let $\upsilon\in H_\rho^{\a/2}$ and  $\varphi\in \mathcal{C}^1([0,T])$ with $\varphi(T)=0.$ By integration by parts, we have
$$\int_0^T\l<u'_\e(t),\upsilon\r>\varphi(t)dt+\int_0^T\l<u_\e(t),\upsilon\r>\varphi'(t)dt=-\l<h,\upsilon\r>\varphi(0),$$
we pass to the limit in the preceding equality as $\e\rightarrow0.$ we obtain
$$\int_0^T\l<\tilde{u}'(t),\upsilon\r>\varphi(t)dt+\int_0^T\l<\tilde{u}(t),\upsilon\r>\varphi'(t)dt=-\l<h,\upsilon\r>\varphi(0).$$
Since $\varphi$ and $\upsilon$ are arbitary, we see that $\tilde{u}(0)=h.$
Finally, let us prove the variational equality of (\ref{e5}).
We let $\psi^{\e}\in L^2(Q\times\Omega;\mathcal {C}_{per}(Y\times Z)),$ then there are two functions
$$\psi_0\in \mathcal{M}(Q)\otimes L^2(\Omega) \;\text{and}\; \psi_1\in \mathcal{M}(Q)\otimes L^2(\Omega)\otimes[(\mathcal{C}_{per}(Y)/\mathbb{C})\otimes\mathcal{C}_{per}(Z)],$$
such that
$$\psi_\e=\psi_0+\epsilon^{(1+\a)/2}\psi_1^\epsilon,\; i.e.,\;
\psi_\e(x,t,\omega)=\psi_0(x,t,\omega)+\epsilon^{(1+\a)/2}\psi_1(x,t,\frac{x}{\epsilon},\frac{t}{\epsilon},\omega).$$
By equation (\ref{1}), one as
\begin{small}
 \begin{equation}\label{e8}
 \begin{split}
  i\int_0^T\int_{\Omega}&\l<u'_\e(t),\bar{\psi}_\e(t)\r>dtd\mathbb{P}=\frac{1}{2}\int_0^T\int_{\Omega}a^\e(u_\e(t),\psi_\e(t))dtd\mathbb{P}+\int_0^T
  \int_{\Omega}(f(t),\psi_\e(t))dtd\mathbb{P}\\
  &+\int_0^T\int_{\Omega}(g(u_\e),\psi_\e(t))dW_td\mathbb{P}
  +\int_0^T\int_{\Omega}(\e^{(1-\a)/2}\mathcal{V}^\e u_\e(t),\psi_\e(t))dtd\mathbb{P}\\
  &+\int_Q\int_{\Omega}\Theta^\e(x,z)\rho(x)u_\e(t)\psi_\e(t)dxdzdtd\mathbb{P}.
   \end{split}
 \end{equation}
 \end{small}
The aim is to pass to the limit in the above equation as $\e$ goes to $0.$ First, we have
$$\int_0^T\int_{\Omega}\l<u'_\e(t),\bar{\psi}_\e(t)\r>dtd\mathbb{P}=-\int_Q\int_{\Omega}u_\e\frac{\partial \bar{\psi}_\e}{\partial t}dxdtd\mathbb{P}.$$
Thus, we have
$$\int_0^T\int_{\Omega}\l<u'_\e(t),\bar{\psi}_\e(t)\r>dtd\mathbb{P}\rightarrow -\int_Q\int_{\Omega}\tilde{u}\frac{\partial \bar{\psi}_0}{\partial t}dxdtd\mathbb{P}=\int_0^T\int_{\Omega}\l<\tilde{u}'(t),\bar{\psi}_0(t)\r>dtd\mathbb{P},$$
as $\e\rightarrow0.$
Then, from Lemma \ref{l3}, as $\e$ goes to $0,$ we have
$$\int_0^T\int_{\Omega}a^\e(u_\e(t),\psi_\e(t))dtd\mathbb{P}\rightarrow\int_0^T\int_{\Omega} a(\textbf{u}(t),\boldsymbol{\phi}(t))dtd\mathbb{P},$$
where $\boldsymbol{\phi}=(\psi_0,\psi_1).$
%In fact, from Lemma \ref{l3}, we obtain
%\begin{equation*}
% \int_0^Ta^\e(u_\e(t),\psi_\e(t))dt\rightarrow\int_0^T a(\textbf{u}(t),\boldsymbol{\phi}(t))dt,
% \end{equation*}
Finally
\begin{footnotesize}
 \begin{equation}\label{e6}
  \int_0^T\int_{\Omega}\e^{(1-\a)/2}(\mathcal{V}^\e u_\e(t),\psi_\e(t))dtd\mathbb{P}=\e^{(1-\a)/2}\int_Q\int_{\Omega}\mathcal{V}^\e u_\e\overline{\psi}_0dxdtd\mathbb{P}+\epsilon\int_Q\int_{\Omega}
 \mathcal{V}^\e u_\e\overline{\psi}^\e_1dxdtd\mathbb{P}
 \end{equation}
 \end{footnotesize}

 In view of Lemma \ref{l4}, we pass to the limit in (\ref{e6}). This yields,

 \begin{equation*}
 \begin{split}
 \int_0^T\int_{\Omega}\e^{(1-\a)/2}(\mathcal{V}^\e u_\e(t),\psi_\e(t))dtd\mathbb{P}&\rightarrow\int_{\Omega\times Q\times Y \times N}u_1\overline{\psi}_0\mathcal{V}dxdtdyd\tau d\mathbb{P},\\
 %&+2\Upsilon\int_0^T\l(\int_{\mathbb{R}}\gamma(x,z)dz,
%    \tilde{u}(x)\psi_0(x)\r),
\end{split}
\end{equation*}

 as $\e\rightarrow0.$

 Hence, passing to the limit in (\ref{e8}) leads to
\begin{equation}\label{e9}
 \begin{split}
  i\int_0^T\int_{\Omega}&\l<u'_0(t),\bar{\psi}_0(t)\r>dtd\mathbb{P}=\frac{1}{2}\int_0^T\int_{\Omega}a(\textbf{u}(t),\boldsymbol{\phi}(t))dtd\mathbb{P}
  +\int_0^T\int_{\Omega}(f(t),\psi_0(t))dtd\mathbb{P}\\
  &\quad+\int_0^T\int_{\Omega}(g(\tilde{u}),\psi_0(t))dW_td\mathbb{P}  +\int_{\Omega\times Q\times Y \times N}u_1\overline{\psi}_0\mathcal{V}dxdtdyd\tau d\mathbb{P}\\
  &\quad+\int_Q\int_{\Omega}\int_{Y\times N}\rho(x)\Theta(y,\eta)\tilde{u}(x)\overline{\psi_0}(x)dxdtdyd\eta d\mathbb{P}\\
  % &\quad+2\Upsilon\int_0^T\l(\int_{\mathbb{R}}\gamma(x,z)dz,
%    \tilde{u}(x)\psi_0(x)\r)
   \end{split}
 \end{equation}
for all $\boldsymbol{\phi}=(\psi_0,\psi_1)\in \mathcal{F}_0^\infty.$ Moreover, since $\mathcal{F}_0^\infty$ is a dense subspace of $\mathbb{F}_0^1,$
by (\ref{e9}) we see that $\textbf{u}=(\tilde{u},u_1)$ verifies (\ref{e5}). Thanks to the uniqueness of the solution for (\ref{e5}) and let the fact that the sequence $E$ is arbitrary, the theorem is proved.
\end{proof}
\end{theorem}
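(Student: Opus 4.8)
The plan is to combine the compactness and two-scale convergence results of Lemma~\ref{l3} and Lemma~\ref{l4} with the uniqueness result of Lemma~\ref{l5}. The strategy is entirely in the spirit of the classical two-scale convergence proofs of homogenization: extract a convergent subsequence, identify the limit as a solution of the two-scale (effective) variational problem, invoke uniqueness to conclude convergence of the whole sequence, and finally upgrade the weak convergence to strong $L^2$ convergence via an energy argument.

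First I would record that, by Lemma~\ref{l2}, the family $(u_\e)$ is bounded in $\mathcal{Y}(0,T)$ and in $L^2(\Omega;C([0,T];L^2(D)))$; hence by Lemma~\ref{l3} a subsequence $E'$ and a pair $\textbf{u}=(\tilde u,u_1)\in\mathbb{F}_0^1$ exist with $u_{\e}\rightharpoonup\tilde u$ in $\mathcal{Y}(0,T)$, $u_{\e}\to\tilde u$ two-scale, and $\mathcal{D}^*u_{\e}\to D_x^*\tilde u+D_y^*u_1$ two-scale. Next I would verify the initial condition $\tilde u(0)=h$: take $v\in H_\rho^{\a/2}(D)$ and $\varphi\in C^1([0,T])$ with $\varphi(T)=0$, integrate by parts in the $u_\e$-equation, pass to the limit using the weak convergence $u_\e\rightharpoonup\tilde u$ and $u_\e'\rightharpoonup\tilde u'$, and deduce $\tilde u(0)=h$ from the arbitrariness of $v,\varphi$. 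Then I would plug the oscillating test function $\psi_\e=\psi_0+\e^{(1+\a)/2}\psi_1^\e$ (with $\psi_0\in\mathcal{M}(Q)\otimes L^2(\Omega)$, $\psi_1$ in the periodic corrector space) into the weak formulation~(\ref{e8}) of~(\ref{1}), and take $\e\to0$ term by term: the time-derivative term converges by the stated integration-by-parts identity and two-scale convergence of $u_\e$; the bilinear form $a^\e(u_\e,\psi_\e)\to a(\textbf{u},\boldsymbol\phi)$ with $\boldsymbol\phi=(\psi_0,\psi_1)$ by Lemma~\ref{l3}; the stochastic integral $\int(g(u_\e),\psi_\e)dW\to\int(g(\tilde u),\psi_0)dW$ using the Lipschitz bound on $g$ together with the strong $L^2$ convergence (or, if strong convergence is not yet available, one passes to the limit in the martingale via the uniform bounds and the Lipschitz continuity, then closes the loop); the potential term $\e^{(1-\a)/2}(\mathcal{V}^\e u_\e,\psi_\e)$ converges by Lemma~\ref{l4}; and the boundary/$\rho$ term converges by two-scale convergence of $u_\e$ against the smooth periodic weight. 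This produces exactly the variational identity~(\ref{e9}), i.e.\ $\textbf{u}$ solves~(\ref{e5}); Lemma~\ref{l5} then forces $\textbf{u}$ to be the unique solution, and since the limit is independent of the chosen subsequence, the whole family converges, giving~(\ref{110}).

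For the strong convergence~(\ref{111}) I would run the standard energy-balance argument. Apply It\^o's formula to $\|u_\e(t)\|_{L^2(D)}^2$ and to $\|\tilde u(t)\|_{L^2(D)}^2$; the anti-self-adjointness of $\mathcal{A}^\e$ and the reality of $\mathcal{V}^\e$ kill the leading-order real parts, so (as in the proof of Lemma~\ref{l2}) one gets $\mathbb{E}\|u_\e(T)\|^2 = \|h\|^2 + \mathbb{E}\int_0^T\|g(u_\e)\|^2\,ds + 2\,\mathbb{E}\,\mathrm{Im}\int_0^T(f,u_\e)\,ds$ and the analogous identity for $\tilde u$. Combining these with the weak convergence $u_\e\rightharpoonup\tilde u$ in $L^2((0,T)\times D\times\Omega)$, the convergence of the $f$-terms, and the Lipschitz estimate $\mathbb{E}\int_0^T\|g(u_\e)-g(\tilde u)\|^2 \le L^2\,\mathbb{E}\int_0^T\|u_\e-\tilde u\|^2$, one obtains $\limsup_\e \mathbb{E}\int_0^T\|u_\e\|^2 \le \mathbb{E}\int_0^T\|\tilde u\|^2$ (after a Gronwall step to absorb the $g$-difference), which together with weak lower semicontinuity yields convergence of norms, hence strong convergence in $L^2((0,T)\times D\times\Omega)$.

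The main obstacle, and the step requiring the most care, is passing to the limit in the stochastic term $\int_0^T\int_\Omega(g(u_\e),\psi_\e)\,dW_t\,d\mathbb{P}$ in a way that is compatible with the two-scale machinery: weak convergence of $u_\e$ is not by itself enough to identify $g(u_\e)\to g(\tilde u)$, so one must either bootstrap using strong $L^2$ convergence (which creates a mild circularity with the energy argument above and has to be untangled, e.g.\ by first proving a weak-limit version of the variational identity using only the uniform bounds plus tightness, then deriving strong convergence, then confirming the stochastic term) or appeal to a stochastic-compactness/martingale argument. A secondary delicate point is the justification of the corrector-scaling cancellations ($\Lambda_4^{\e'}\to0$, $D_y(\Theta\phi_1)=0$ etc.) uniformly in $\omega$, which relies on the nonlocal Poincar\'e inequality and the singular kernel $\gamma$ being handled exactly as in Lemma~\ref{l3}; I would simply cite those computations rather than redo them.
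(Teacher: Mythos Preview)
Your proposal follows essentially the same route as the paper for \eqref{110}: extract a subsequence via Lemma~\ref{l3}, verify $\tilde u(0)=h$ by integration by parts, test \eqref{1} against the oscillating function $\psi_\e=\psi_0+\e^{(1+\a)/2}\psi_1^\e$, pass to the limit term by term (using Lemma~\ref{l3} for $a^\e$ and Lemma~\ref{l4} for the potential), and conclude by the uniqueness of Lemma~\ref{l5}.

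Where you differ is in what you add. The paper's proof never actually establishes the strong convergence \eqref{111}; it asserts it in the statement but the argument stops once \eqref{e9} is obtained and uniqueness invoked. Your energy-balance argument (It\^o on $\|u_\e\|^2$ and $\|\tilde u\|^2$, exploiting the self-adjointness of $\mathcal{A}^\e$ and reality of $\mathcal{V}^\e$ to kill the real parts, then matching norms via weak lower semicontinuity and Gronwall) is a genuine addition that fills this gap. Likewise, you correctly flag the circularity in the stochastic term: the paper simply writes $\int(g(u_\e),\psi_\e)dW_t\to\int(g(\tilde u),\psi_0)dW_t$ without comment, whereas you note that weak convergence of $u_\e$ alone does not identify $g(u_\e)\to g(\tilde u)$ and that one must either bootstrap from strong $L^2$ convergence or use a compactness argument. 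This is a real subtlety the paper glosses over; your suggested resolution (prove a provisional limit identity, derive strong convergence, then close the loop) is the standard way out and is more honest than what the paper does.
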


For further needs, we wish to give a simple representation of the function $u_1.$
For this purpose, Let us introduce the bilinear form $${\hat{a}}(w,v)=\int_{Y\times N \times Z}\Theta(y,\eta)(D^*_yw\cdot\overline{D^*_yv})dy d\eta d\tau $$
for all $w,v\in L^2_{per}(Z;H_{\#}^{\a/2}(Y)).$

Due to the nonlocal Poincar\'{e} inequality, we obtain ${\hat{a}}(w,v)$ is coercive on the space $L^2_{per}(Z;H_{\#}^{\a/2}(Y))$. Moreover,
${\hat{a}}(w,v)\leq C ||w||_{H_{\#}^{\a/2}(Y)}||v||_{H_{\#}^{\a/2}(Y)}, $ where $C$ is a positive constant.

%Further, the potential $\mathcal{V}(y,\tau)$ is of zero mean value for the variable $y$. Thanks to the Lax-Milgram theorem, we let $\xi$ be the unique function defined by
%\begin{align}\label{e50}
%{\hat{a}}(\xi,v)=\int_{Y\times Z}\mathcal{V}\overline{v}dyd\tau,
%\end{align}
%for all $v\in L^2_{per}(Z;H_{\#}^{\a/2}(Y)).$

\begin{remark}\label{r5}(Representation of $u_1$)
We consider the variational problem
\begin{equation}\label{654}
\begin{cases}
 $${\hat{a}}(\chi,v)=\int_{Y\times N \times Z}\Theta(y,\eta)\overline{D^*_yv}dyd\tau d\eta,\\
 \chi\in L^2_{per}(Z;H_{\#}^{\a/2}(Y)),\\
\end{cases}
\end{equation}
for all $v\in L^2_{per}(Z;H_{\#}^{\a/2}(Y)).$ It determines $\chi$ in a unique manner.

%\begin{lemma}\label{l100}
  Under the assumption of Lemma \ref{l3}, we have
  \begin{equation}\label{eyong17}
  u_1(x,t,y,\tau,\omega)=-\frac{1}{|D|}\int_{D}(D^*_x\tilde{u})(x,t,z,\omega)dz\cdot\chi(y,\tau)
  \end{equation}
   for almost all $(x,t,y,\tau,\omega)\in Q\times Y \times Z\times\Omega.$ \newline
In fact, in view of (\ref{e5}), we choose the particular test function $\textbf{v}=(v_0,v_1)\in \mathbb{F}_0^1$ with $v_0=0$ and $v_1=\varphi\times v,$ where
$\varphi\in \mathcal{M}(Q)$ and $v\in L^2_{per}(Z;H_{\#}^{\a/2}(Y)).$ This yields
\begin{equation}\label{103}
\begin{split}
  0&=|D|\int_{\Omega}{\hat{a}}(u_1,v)d\mathbb{P}
  +\int_{\Omega}\int_{D}(\mathcal{D}_x^*\tilde{u})(x,t,z)dzd\mathbb{P}\\
  &\quad\times\int_{Y\times N \times Z}(\mathcal{D}_y^*v)(x,t,y,\tau,\eta)\Theta(y,\eta)dyd\tau d\eta dx,
  \end{split}
\end{equation}
almost everywhere in $(x,t)\in Q$ and for all $v\in L^2_{per}(Z;H_{\#}^{\a/2}(Y)).$ By the fact that $u_1(x,t,\omega)($for fixed $(x,t,\omega)\in (Q\times\Omega)$) is the sole function in  $L^2_{per}(Z;H_{\#}^{\a/2}(Y))$ solving equation (\ref{103}). At the same time, it is an easy matter to check that the right hand side
of (\ref{eyong17}) solves the same equation (\ref{103}).
  \end{remark}

  %\end{lemma}

  \subsection{Effective system}\label{sub3.3}
  In this section, we will show  that the limit process $\tilde{u}$ satisfies the following nonlocal stochastic  Schr\"{o}dinger equation (\textbf{effective system}):
 \begin{align}\label{e7}
  \begin{cases}
    id\tilde{u}=\l(-\Xi_1(-\Delta)^{\a/2}\tilde{u}-\frac{\Xi_2}{2}(\mathcal{D}\zeta)(x)-\Xi_3\zeta(x)\r)dt+g(\tilde{u})dW_t+fdt,\\
    \tilde{u}(x,t)=0,\quad\quad (x,t) \in D^c\times(0,T),\\
\tilde{u}(0)=h(x),  \quad\quad x \in D,$$
  \end{cases}
\end{align}
where
\begin{align*}
  &\Xi_1=\int_{Y\times N}\Theta(y,\eta)dydn,\\
  &\Xi_2=\int_{Y\times N \times Z}\Theta(y,\eta)\mathcal{D}_y^*\chi dydnd\tau,\\
  &\Xi_3=2\int_{Y\times Z}\mathcal{V}(y,\tau)\chi(y,\tau)dyd\tau,\\
  &\zeta(x)=\frac{1}{|D|}\int_{D}(\mathcal{D}^*\tilde{u})(x,z)dz,\\
    &\mathcal{D}|_{D}(\zeta)(x)=\int_{D}(\zeta(x)+\zeta(z))\gamma(x,z)dz.\\
%  &\Upsilon_1(x)=\Upsilon\int_{\mathbb{R}}\gamma(x,z)dz.
\end{align*}
We can see that if $\tilde{u}$ verifies equation (\ref{e7}) then $\textbf{u}=(\tilde{u},u_1)$ satisfies equation (\ref{e5}). From Lemma \ref{l5}, we obtain equation (\ref{e7}) has at most one weak solution $\tilde{u}.$
%\begin{lemma}
%  Suppose the hypotheses of Lemma \ref{l1} and \ref{l2} are satisfied. Then equation (\ref{e7}) has at most one weak solution $\tilde{u}.$
%  \begin{proof}
%    We can see that if $\tilde{u}$ verifies equation (\ref{e7}) then $\textbf{u}=(\tilde{u},u_1)$ satisfies equation (\ref{e5}).
%  \end{proof}
%\end{lemma}

\begin{theorem}(Effective equation)
  Suppose the hypotheses of Lemma \ref{l1} and \ref{l2} are satisfied. Let $u_\e$ be the solution of heterogeneous equation (\ref{1}). Then, as scale parameter epsilon goes to $0$, we have $u_\e\rightarrow \tilde{u}$ in $\mathcal{Y}(0,T)$-weakly, where $\tilde{u}$ is the unique weak solution of effective equation (\ref{e7}) in $\mathcal{Y}(0,T)$.
\end{theorem}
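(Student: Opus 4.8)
The plan is to deduce (\ref{e7}) directly from the variational problem (\ref{e5}). Under the present hypotheses the Effective approximation theorem applies, so we already know that $u_\e\to\tilde u$ weakly in $\mathcal Y(0,T)$ as $\e\to0$ and that the pair $(\tilde u,u_1)\in\mathbb F_0^1$ furnished by Lemma \ref{l3} is the \emph{unique} solution of (\ref{e5}). Hence it remains only to show (i) that $\tilde u$ by itself satisfies the weak form of (\ref{e7}), which is what (\ref{e5}) becomes once the corrector $u_1$ is eliminated, and (ii) that (\ref{e7}) has at most one weak solution in $\mathcal Y(0,T)$; the stated weak convergence is then precisely the convergence already supplied by the Effective approximation theorem.

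For (i) I would substitute the explicit representation $u_1(x,t,y,\tau,\omega)=-\zeta(x,t,\omega)\,\chi(y,\tau)$ of Remark \ref{r5} into (\ref{e5}) and test against $\mathbf v=(v_0,0)$ with $v_0\in\mathcal M(Q)\otimes L^2(\Omega)$ (admissible, since $(v_0,0)\in\mathbb F_0^1$, and it kills every $\overline{D^*_yv_1}$ contribution to $a(\mathbf u,\mathbf v)$). Three routine reductions then carry out the integration over the fast variables $(y,\eta,\tau)$. First, the piece $\int\Theta(y,\eta)\,D^*_x\tilde u\,\overline{D^*_xv_0}$ of $\tfrac12a$ integrates to $\tfrac12\Xi_1\int_{D\times D}\mathcal D^*\tilde u\,\overline{\mathcal D^*v_0}$, and combining it with the boundary term $\int_{Y\times N}\rho\,\Theta\,\tilde u\,\overline{v_0}\,dyd\eta=\Xi_1\rho\,\tilde u\,\overline{v_0}$ — which supplies exactly the $\int_D\rho\,\tilde u\,\overline{v_0}$ part of the $H^{\a/2}_\rho$ form — reassembles, by the polarization of the identity $(-(-\Delta)^{\a/2}u,u)_{L^2(D)}=\|u\|^2_{H^{\a/2}_\rho(D)}$ of Section 2, into $\bigl(-\Xi_1(-\Delta)^{\a/2}\tilde u,\,v_0\bigr)_{L^2(D)}$. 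Second, the cross piece $\int\Theta(y,\eta)\,D^*_yu_1\,\overline{D^*_xv_0}$ equals $-\Xi_2\int_{D\times D}\zeta(x)\,\overline{D^*_xv_0(x,z)}\,dxdz$; symmetrizing in $x\leftrightarrow z$ and using $\gamma(z,x)=-\gamma(x,z)$ rewrites the $z$-integral as $\mathcal D|_D(\zeta)(x)$, so this piece becomes $-\Xi_2\bigl(\mathcal D|_D(\zeta),v_0\bigr)_{L^2(D)}$. Third, the potential term $2\bigl(\int_{Y\times Z}u_1\mathcal V\,dyd\tau,\,v_0\bigr)=2\bigl(-\tfrac{\Xi_3}{2}\zeta,\,v_0\bigr)=-\Xi_3(\zeta,v_0)_{L^2(D)}$. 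Carrying the unchanged $\langle\tilde u',\cdot\rangle$, $g$ and $f$ terms along, (\ref{e5}) with $v_1=0$ becomes $i\int_0^T\int_\Omega\langle\tilde u'(t),\overline{v_0}(t)\rangle\,dtd\mathbb P=\int_0^T\int_\Omega\bigl(-\Xi_1(-\Delta)^{\a/2}\tilde u-\tfrac{\Xi_2}{2}\mathcal D|_D(\zeta)-\Xi_3\zeta,\,v_0\bigr)_{L^2(D)}dtd\mathbb P+\int_0^T\int_\Omega(g(\tilde u),v_0)\,dW_t\,d\mathbb P+\int_0^T(f,v_0)\,dt\,d\mathbb P$, and a density argument in $v_0$ then identifies $\tilde u$ as a weak solution of (\ref{e7}) in $\mathcal Y(0,T)$.

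For (ii) I would run this computation in reverse. Given any weak solution $\hat u$ of (\ref{e7}), put $\hat u_1:=-\hat\zeta\,\chi$ with $\hat\zeta(x)=\tfrac{1}{|D|}\int_D\mathcal D^*\hat u(x,z)\,dz$. Testing (\ref{e5}) against $(0,v_1)$ reduces, after integrating out $z$, to $\hat a(\chi,v_1(x,\cdot))=\int_{Y\times N\times Z}\Theta\,\overline{D^*_yv_1}\,dyd\eta d\tau$ for a.e.\ $x$, which is the defining relation (\ref{654}) of $\chi$; testing against $(v_0,0)$ reduces, by the second paragraph read backwards, to $\hat u$ solving (\ref{e7}). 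Thus $(\hat u,\hat u_1)$ solves (\ref{e5}), and Lemma \ref{l5} forces $(\hat u,\hat u_1)=(\tilde u,u_1)$, hence $\hat u=\tilde u$. Together with (i) and the weak convergence from the Effective approximation theorem, this proves the statement.

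The only real obstacle is the bookkeeping in (i): one must keep track of the $D^c$-integration (the $\rho$-weight) so that the fractional Laplacian term comes out with coefficient exactly $\Xi_1$, and must handle the conjugates together with the antisymmetry $\gamma(z,x)=-\gamma(x,z)$ in the symmetrization that converts $\int_{D\times D}\zeta(x)\,\overline{D^*_xv_0(x,z)}\,dxdz$ into $(\mathcal D|_D(\zeta),v_0)_{L^2(D)}$ — this is precisely where the prefactors $1$, $\tfrac12$, $1$ of $\Xi_1$, $\Xi_2$, $\Xi_3$ in (\ref{e7}) get pinned down. The stochastic and source terms pass through (\ref{e5}) untouched, so no probabilistic estimates beyond those of Lemma \ref{l2} are needed.
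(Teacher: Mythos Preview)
Your proposal is correct and follows essentially the same approach as the paper: invoke the Effective approximation theorem for the weak convergence, substitute the corrector representation $u_1=-\zeta\,\chi$ from Remark~\ref{r5} into the variational problem (\ref{e5}) (the paper uses the equivalent (\ref{e9})), compute the $\Xi_1,\Xi_2,\Xi_3$ contributions by integrating out the fast variables, and obtain uniqueness by lifting any weak solution of (\ref{e7}) to a solution of (\ref{e5}) and applying Lemma~\ref{l5}. Your treatment is in fact more explicit than the paper's, which compresses the substitution step into ``a simple computation yields equation (\ref{e7})''.
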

\begin{proof}
Since the fact that, from any fundamental sequence $\e'\in E$ one can extract a subsequence $E'$ such that as $\e$ goes to $0$, we have (\ref{110})-(\ref{111}), and  (\ref{e9}) holds for all $\boldsymbol{\phi}=(\psi_0,\psi_1)\in \mathcal{F}_0^\infty$, where $\textbf{u}=(\tilde{u},u_1)\in \mathbb{F}_0^1.$ Now, substituting $u_1$ in Remark \ref{r5} to (\ref{e9}), a simple computation yields equation (\ref{e7}).
\end{proof}

\subsection{An example}
In this subsection, we set $\Theta^{\e}(x,y)=1.$ We can simplify the heterogeneous system (\ref{1}):
\begin{equation}
\begin{cases}
$$idu_\e=\l(-(-\Delta)^{\a/2} u_\e+\epsilon^{(1-\a)/2}\mathcal{V}^\epsilon u_\e\r)dt+g(u_\e)dW_t+fdt, \; t>0, \; x \in D=(-1,1),  \\
u_\epsilon(0,x)=h(x),\quad\quad x \in D,\\
u_\e(t,x)=0,  \quad\quad x \in D^c=\mathbb{R}\backslash D,$$
\end{cases}
\end{equation}

From subsection \ref{sub3.3}, equation(\ref{654}) and the fact that the function $\chi, \xi\in L^2_{per}(Z;H_{\#}^{\a/2}(Y))$, we have
\begin{align*}
  \Xi_1&=\int_{Y\times N}\Theta(y,\eta)dydn=1,\\
  \Xi_2&=\int_{Y\times N \times Z}\mathcal{D}_y^*\chi dydnd\tau=\int_Z\int_{Y\times N}\l(\chi(y,\tau)-\chi(\eta,\tau)\r)(y-\eta)\frac{1}{|y-\eta|^{\frac{3+\a}{2}}}dyd\eta d\tau\\
  &=2\int_Z\int_{Y\times N}\chi(y,\tau)(y-\eta)\frac{1}{|y-\eta|^{\frac{3+\a}{2}}}dyd\eta d\tau=0,\\
  \Xi_3&=\int_{Y\times Z}\mathcal{V}(y,\tau)\chi(y,\tau)dyd\tau=0.\\
  \end{align*}
Then, we can obtain the effective system
\begin{align}
  \begin{cases}
    id\tilde{u}=\l(-(-\Delta)^{\a/2}\tilde{u}\r)dt+g(\tilde{u})dW_t+fdt,\\
    \tilde{u}(x,t)=0,\quad\quad (x,t) \in D^c\times(0,T),\\
\tilde{u}(0)=h(x),  \quad\quad x \in D.$$
  \end{cases}
\end{align}
We can see that, in this situation, the term of oscillating potential  has no influence on the effective system.

\end{document}